\documentclass[a4paper,11pt]{article}

\usepackage[margin=1.05in]{geometry}
\usepackage{amsmath,amssymb,amsthm,mathtools}
\numberwithin{equation}{section}
\usepackage{enumitem}
\usepackage{hyperref}
\usepackage{microtype}
\usepackage{mathrsfs}
\hypersetup{colorlinks=true, linkcolor=blue, citecolor=blue, urlcolor=blue}

\newtheorem{theorem}{Theorem}[section]
\newtheorem{proposition}[theorem]{Proposition}
\newtheorem{lemma}[theorem]{Lemma}
\newtheorem{corollary}[theorem]{Corollary}
\newtheorem{definition}[theorem]{Definition}
\newtheorem{remark}[theorem]{Remark}

\newcommand{\R}{\mathbb{R}}
\newcommand{\Ent}{\operatorname{Ent}}

\newcommand{\id}{\operatorname{id}}
\newcommand{\supp}{\operatorname{spt}}
\newcommand{\ri}{\operatorname{ri}}

\newcommand{\diver}{\operatorname{div}}

\newcommand{\argmin}{\operatorname*{argmin}}
\newcommand{\argmax}{\operatorname*{argmax}}
\newcommand{\dd}{\mathrm d}
\newcommand{\D}{\mathrm D}
\newcommand{\cL}{\mathcal L}
\newcommand{\cP}{\mathcal P}

\newcommand{\mm}{\mathfrak m}

\title{\textbf{Wasserstein Barycenter Convexity Detects Hilbertian Geometry}}
\author{Bang-Xian Han\thanks{School of Mathematics, Shandong University, Jinan, China. Email: hanbx@sdu.edu.cn.}
	\and Deng-Yu Liu
	\thanks{School of Mathematical Sciences, University of Science and Technology of China, Hefei, China. Email: yzldy@mail.ustc.edu.cn}}

\date{\today}

\begin{document}
	\maketitle

		\begin{abstract}
We prove that convexity of the Boltzmann entropy at Wasserstein barycenters
is strong enough to distinguish Hilbert spaces from general Banach
spaces. Thus Wasserstein barycenters provide an intrinsic optimal-transport
test for Hilbertian geometry. More precisely, we show that if a finite-dimensional normed vector space, equipped with Lebesgue measure, satisfies the Wasserstein Jensen's inequality for the entropy at barycenters of arbitrary finite families of probability measures, then its norm must be induced by an inner product.

This contrasts sharply with a well-known result: every finite-dimensional normed vector space
satisfies the  nonnegative Ricci curvature condition in the  sense of Lott--Sturm--Villani, whereas barycenter convexity excludes all
non-Hilbertian norms. As a consequence, smooth reversible Finsler manifolds
satisfying the corresponding barycentric curvature-dimension condition have
Riemannian tangent norms.

The proof does not assume 
strict convexity of the norm. Its two main ingredients are a rank-one
polarization argument, which yields the dual parallelogram identity in the
strictly convex case, and a maximal-face trapping argument, which rules out
flat faces of the unit ball.
\end{abstract}

	\medskip
	\noindent\textbf{Keywords.}
	Wasserstein barycenter; curvature-dimension condition; normed vector spaces; Finsler geometry; Hilbertianity; optimal transport.
	
	\medskip
	\noindent\textbf{MSC 2020.}
	Primary 53C23, 49Q22; Secondary 46B20, 53C60, 28A75.

\tableofcontents

	\section{Introduction}\label{sec:introduction}
	The displacement convexity of entropy functionals along $W_2$ geodesics, introduced by McCann in \cite{McCann1997}, is an important theme in optimal transport. It relates optimal transport, lower Ricci curvature, and geometric functional inequalities. This point of view led Lott--Villani and Sturm to the synthetic curvature-dimension theory on metric-measure spaces \cite{LottVillani,SturmI,SturmII}. The relation between curvature bounds, optimal transport, and Pr\'ekopa--Leindler type inequalities goes back to the Riemannian interpolation inequalities of Cordero-Erausquin--McCann--Schmuckenschl\"ager \cite{CEMS,CEMS2006}.
	In Lott--Sturm--Villani's theory, curvature-dimension conditions are expressed through convexity of entropy along $L^2$-Wasserstein geodesics. This provides a formulation of lower Ricci bounds outside the smooth Riemannian setting.
	
	As Villani notes in \emph{``the last theorem in Old and New''} \cite[p.~926]{Villani2009}, every finite-dimensional normed vector space equipped with Lebesgue measure satisfies the standard synthetic non-negative Ricci condition in the weak Lott--Villani--Sturm sense. Thus  Lott--Villani--Sturm's $\mathrm{CD}$ condition  does not distinguish a Banach space from a Hilbert space. This is one reason for Ambrosio--Gigli--Savar\'e's \(\mathrm{RCD}\) theory \cite{AGS14,Gigli}. In that theory, Finsler structures are excluded by extra Riemannian assumptions: infinitesimal Hilbertianity, linearity of the heat flow, or equivalently the quadraticity of the Cheeger energy. For the Finsler side of the curvature-dimension theory, including weighted Ricci curvature and nonlinear analytic aspects, see also \cite{Ohta,Ohta2014,Ohta2017Survey,OhtaSturm2014}.
	
	\medskip
	The basic question of this paper is whether one can detect the
Riemannian, rather than merely Finsler, nature of a space using only
entropy inequalities in optimal transport.
For ordinary displacement convexity, the answer is negative: in the weak
Lott--Villani--Sturm theory, finite-dimensional Banach spaces already satisfy
the nonnegative curvature-dimension condition. Our result shows that the
answer becomes positive once geodesic interpolation is replaced by
Wasserstein barycenters.

	Motivated by Wasserstein barycenter geometry \cite{AguehCarlier,KimPass}, the barycenter curvature-dimension condition \(\mathrm{BCD}\) was introduced in \cite{HanLiuZhuBCD,HanLiuZhuSurvey}. In this condition displacement interpolation in the Lott--Sturm--Villani theory is replaced by Wasserstein barycenters, and the entropy functionals are required to satisfy the Wasserstein Jensen's inequality at Wasserstein barycenters of finite weighted families of measures.

	\begin{definition}[BCD condition]
		\label{def:bcd-infty}
		Let $(X,\mathsf{d},\mathfrak{m})$ be a geodesic metric measure space. We say that it satisfies $\mathrm{BCD}(K,\infty)$ if for every finite family \(\mu_i\in\cP_2(X)\) and every set of weights $\lambda_i>0$, $\sum_i\lambda_i=1$, there exists $\bar\mu \in\argmin_{\eta\in \cP_2(X)} \sum_{i}\lambda_i W_2^2(\eta,\mu_i)$ (called a Wasserstein barycenter) such that
		\begin{equation}\label{jensen}
			\Ent_\mm(\bar\mu)\le \sum_i\lambda_i\Ent_\mm(\mu_i)-\frac{K}{2} \sum_{i}\lambda_i W_2^2(\bar{\mu},\mu_i),
		\end{equation}
			where the Boltzmann entropy ${\rm Ent}_\mm(\cdot)$ is defined by
			\begin{equation*}
				{\rm Ent}_\mm(\mu):=
				\left \{\begin{array}{ll}
					\int \rho\ln \rho\,\dd\mm &\text{if } \mu\ll \mm,\ \mu=\rho\,\mm,\\
					+\infty &\text{otherwise}.
				\end{array}\right.
			\end{equation*}
			and $W_2(\cdot,\cdot)$
		is the 2-Wasserstein distance associated with $\mathsf{d}^2$. 
		
	\end{definition}

	For two marginals, weighted Wasserstein barycenters are the corresponding points on \(W_2\)-geodesics. Thus the above condition gives
	\[
	\mathrm{BCD}(0,\infty)\Longrightarrow \mathrm{CD}(0,\infty)
	\]
	in the weak Lott--Villani sense, with the same entropy convention.

	On smooth Riemannian manifolds, this stronger condition has the expected meaning. In the smooth Riemannian setting, the implication from a Ricci lower bound to the Wasserstein Jensen's inequality follows from the work of Kim--Pass \cite{KimPass}. Conversely, applying the two-marginal case recovers the usual displacement convexity of entropy, and the theorem of von Renesse--Sturm \cite{vRenesseSturm} gives the Ricci lower bound.
	
	\medskip
	
	The main theorem below states that, outside the Riemannian category, BCD has a rigidity that CD does not have. In particular, the barycenter curvature-dimension condition excludes the Finsler examples.  A vivid analogy:
	\begin{center}
	\emph{``Finsler geometry may masquerade as Riemannian geometry along a single geodesic, but it inevitably betrays its non-quadratic nature at the barycenter under multi-directional tension."}
	\end{center}
	
	\begin{theorem}[Barycenter convexity rigidity]\label{thm:main-intro}
		If \((E,\|\cdot\|,\mathcal L^n)\) is a finite-dimensional normed vector space with Lebesgue measure and satisfies \(\mathrm{BCD}(0,\infty)\), then \(\|\cdot\|\) is induced by an inner product.
	\end{theorem}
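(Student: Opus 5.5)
We test BCD$(0,\infty)$ on families of measures for which the Wasserstein barycenter can be computed explicitly. The natural choice is translates and linear images of a single measure, and multi-marginal configurations built from the Euclidean structure. The key object is the squared norm $c(x)=\tfrac12\|x\|^2$. For translates of a fixed density, the barycenter problem in $(E,\|\cdot\|)$ reduces to a multi-marginal problem with cost $\sum_i\lambda_i c(x_i-y)$ minimized in $y$. The plan is to choose marginals $\mu_i=(T_i)_\#\mu$ with $T_i$ affine. First-order optimality for the barycenter then reads $\sum_i\lambda_i \nabla c(x_i-y)=0$, i.e. the dual vectors $\sum_i\lambda_i J(x_i-y)=0$, where $J$ is the duality map.

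The first step is the strictly convex case, via the rank-one polarization the abstract alludes to. Take $\mu$ uniform on a small ball and three or more marginals obtained by pushing $\mu$ along maps $x\mapsto x+t v_i(x)$, where the $v_i$ are rank-one perturbations $v_i(x)=\langle \xi_i,x\rangle w_i$. Entropy of a pushforward under an affine map is $\Ent(\mu)-\log|\det A|$. Along a geodesic the interpolating maps are affine, and $\log\det$ concavity gives CD for free. At a barycenter, however, the map $x_i\mapsto y$ is determined by inverting the duality map $J$ of the norm. Expanding $\Ent(\bar\mu)-\sum\lambda_i\Ent(\mu_i)$ to second order in $t$, the leading term is a quadratic form in the data $(\xi_i,w_i)$. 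It involves the Hessian of $c^*=\tfrac12\|\cdot\|_*^2$ at different points (second derivatives where they exist, a.e. by Alexandrov).

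The inequality must hold for all choices and both signs, so this quadratic form is semidefinite. Polarizing in the rank-one data forces the Hessian of $\|\cdot\|_*^2$ to be independent of the base point, i.e. $\|\cdot\|_*^2$ is a quadratic form. Equivalently, the dual norm satisfies the parallelogram identity, and hence so does $\|\cdot\|$. The asymmetry with the geodesic case is that with $\geq3$ marginals the barycenter map mixes Hessians at genuinely different dual points, whereas with two marginals convexity of $\det^{1/n}$ absorbs them.

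The second step, which I expect to be the main obstacle, is removing strict convexity. If the unit ball has a flat face $F$, then $J$ is multivalued and barycenters are non-unique. BCD only asserts existence of one good barycenter, so we must show that every barycenter violates Jensen. The plan is to choose a maximal face $F$ with normal functional $\phi$ and place marginals displaced in directions from the relative interior of the cone over $F$. Then all optimal couplings satisfy $\phi(x_i-y)=\|x_i-y\|$, which traps every barycenter in a thin slab. Inside the slab the cost along $F$-directions is linear in $y$ plus a quadratic term only in the $\phi$-direction. This degeneracy makes the barycenter problem too flexible in the face directions. We then construct marginals, e.g. uniform measures on parallelepipeds adapted to $F$, for which any measure in the trapped class has entropy strictly larger than the average, contradicting (\ref{jensen}). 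Hence the ball is strictly convex. Applying the same argument to the dual norm (or smoothness via the first step) lets us conclude with the parallelogram identity and the Jordan--von Neumann theorem.
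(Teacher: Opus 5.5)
Your two-part structure (polarization in the strictly convex case, then ruling out flat faces by trapping optimal displacements in the cone over a face) matches the paper's. In both parts, however, the step that lets you invoke $\mathrm{BCD}(0,\infty)$ is missing. You must certify a specific measure as a barycenter and, since BCD only guarantees that \emph{some} barycenter satisfies \eqref{jensen}, show that it is the only one.

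In Step~1 you push $\mu$ forward by rank-one maps $x\mapsto x+t\langle\xi_i,x\rangle w_i$ and implicitly treat $(T_1,\dots,T_m)_\#\mu$ as the optimal multi-marginal coupling, solving the first-order condition pointwise. That condition is only necessary, and the coupling is in general not optimal; even a single such map is $c$-optimal only for special data (essentially $w_i$ parallel to $J(\xi_i)$). The paper reverses the logic. It fixes $\nu$ and takes the potentials $\varepsilon\Phi_{p+q},\varepsilon\Phi_{p-q},-2\varepsilon\Phi_p,-2\varepsilon\Phi_q$ with $\Phi_r=\frac12r(x)^2$. These sum to zero and are $c$-concave for small $\varepsilon$ (via $h(w)\ge r(w)^2/(2\|r\|_*^2)$), so Lemma~\ref{lem:dual-calibration} makes $\nu$ a barycenter. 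Uniqueness then follows from single-valuedness of $\partial^c\psi_i$ and invertibility of the affine $T_1$; you never address uniqueness here.

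Your endgame (a second-order expansion with Hessians of $\frac12\|\cdot\|_*^2$ at different points, and ``polarization forcing the Hessian to be constant'') is asserted rather than derived, and it looks in the wrong place. By $2$-homogeneity the calibrated maps $x\mapsto x-\alpha r(x)J(r)$ are affine with determinant exactly $1-\alpha\|r\|_*^2$. The barycenter is in general not obtained by averaging the maps, so the first-order entropy term need not cancel. Jensen reads $(1-\varepsilon\|p+q\|_*^2)(1-\varepsilon\|p-q\|_*^2)(1+2\varepsilon\|p\|_*^2)(1+2\varepsilon\|q\|_*^2)\le1$. Its first-order term gives $\|p+q\|_*^2+\|p-q\|_*^2\ge2\|p\|_*^2+2\|q\|_*^2$, and substituting $(p+q,p-q)$ for $(p,q)$ gives the reverse. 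No second derivatives enter.

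In Step~2 the crux is only promised, and the strategy works against itself. You note that a flat face makes the barycenter problem flexible, then hope to show that \emph{every} trapped barycenter has entropy above the average. But flexibility means many barycenters, and BCD needs only one good one. With two marginals a good barycenter always exists, since every finite-dimensional normed space satisfies $\mathrm{CD}(0,\infty)$. Likewise, if all marginals are translates of one measure, a translate of that measure is a barycenter with exactly the average entropy. Nothing in ``uniform measures on parallelepipeds adapted to $F$'' controls such a family.

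The missing idea is to \emph{destroy} the flexibility with extremal anchors. Choose $\ell$ with unique maximizer $a$ and unique minimizer $b$ on $F$ (Lemma~\ref{lem}), and include the translations $x\mapsto x+ta$ and $x\mapsto x+tb$, both calibrated by $-tp$. Equality in the dual bound forces every barycenter $\eta$ to reach these marginals with displacements in $tF$. Comparing $\int\ell\,\dd\eta$ with $\int\ell\,\dd\nu$ through both anchors then gives $\eta=\nu$. The anchors must sit at exposed extreme points of $F$; translations from $\ri F$, as you propose, are exactly the non-rigid ones.

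A third branch $x\mapsto x-2te(x)$ with non-constant $e(x)\in F$, calibrated by $2tp$ so the potentials sum to zero, then gives the contradiction. It has $\Ent(\mu_3^t)=\Ent(\nu)+2t\int\rho\,\diver e\,\dd x+O(t^2)<\Ent(\nu)$ when $\int\rho\,\diver e\,\dd x<0$, violating Jensen at the unique barycenter.

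Finally, the appeal to the dual norm is unnecessary, and BCD is not assumed for $\|\cdot\|_*$ anyway. Strict convexity of $\|\cdot\|$ makes $h^*$ differentiable, which is all Step~1 needs.
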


	Thus the passage from geodesics to barycenters is not a cosmetic
strengthening of convexity: it changes the class of infinitesimal models. Together with the stability of the barycenter curvature-dimension condition under measured blow-up, the following corollary rules out non-Hilbertian reversible Finsler tangent norms. Its proof is given in Section~\ref{sec:main-corollary-proofs}.
	
	\begin{corollary}[Exclusion of non-Riemannian reversible Finsler tangents]
		\label{cor:finsler-exclusion}
		Let \((M,F,\mathfrak m)\) be a smooth reversible Finsler manifold with a smooth positive
		reference measure. Assume that it satisfies \(\mathrm{BCD}(K,\infty)\). Then \(F_x\) is induced by an inner product on \(T_xM\) for every \(x\in M\).
		In particular, the Finsler structure is Riemannian.
	\end{corollary}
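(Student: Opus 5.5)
The proof of Corollary~\ref{cor:finsler-exclusion} will be a blow-up argument that reduces to Theorem~\ref{thm:main-intro}. Fix \(x\in M\) and consider the rescaled spaces \((M,\varepsilon^{-1}\mathsf d_F,\mathfrak m_\varepsilon,x)\), where \(\mathfrak m_\varepsilon\) is \(\mathfrak m\) multiplied by \(\varepsilon^{-n}/\rho(x)\) and \(\rho\) is the smooth positive density of \(\mathfrak m\) in a chart. Under this rescaling, \(\mathrm{BCD}(K,\infty)\) becomes \(\mathrm{BCD}(\varepsilon^2K,\infty)\). This holds because the entropy changes only by an additive constant, and the constant cancels on both sides of \eqref{jensen} since \(\sum_i\lambda_i=1\). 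Meanwhile \(W_2^2\) picks up a factor \(\varepsilon^{-2}\). Since \(F\) is smooth and reversible, the rescaled spaces converge in the pointed measured Gromov--Hausdorff sense to \((T_xM,F_x,\mathcal L^n)\). If I can show that \(\mathrm{BCD}(\varepsilon^2K,\infty)\) passes to this limit as \(\mathrm{BCD}(0,\infty)\), then Theorem~\ref{thm:main-intro} shows that \(F_x\) comes from an inner product. Doing this at every \(x\) makes \(F\) Riemannian.

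The main work is the stability step. Since the limit is explicit, I would avoid abstract pmGH machinery and work directly in exponential-type coordinates. Take \(\mu_1,\dots,\mu_k\) in \(\cP_2(T_xM)\), and by density assume they have compact support and smooth positive densities. The finite-entropy case is then recovered by truncation and mollification, using lower semicontinuity of entropy together with the fact that \(\sum_i\lambda_i\Ent\) is continuous along mollifications in finite dimension. Push the \(\mu_i\) forward by \(\Phi_\varepsilon(v)=\exp_x(\varepsilon v)\), or by any chart map whose differential at \(0\) is the identity. This gives measures \(\mu_i^\varepsilon\) on \(M\). For each \(\varepsilon\) choose a barycenter \(\bar\mu^\varepsilon\) satisfying \eqref{jensen}, and pull it back to get \(\nu^\varepsilon\) on \(T_xM\).

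Next I check two things. First, the metrics \(\varepsilon^{-1}\mathsf d_F(\Phi_\varepsilon v,\Phi_\varepsilon w)\) converge to \(F_x(v-w)\) uniformly on compact sets, and the rescaled measures converge to \(\mathcal L^n\) with locally uniformly convergent densities. Second, the barycenters \(\nu^\varepsilon\) stay in a fixed compact set. This follows because a barycenter is supported in a bounded neighborhood of the supports: its cost is at most the cost of \(\mu_1\), so its second moment is controlled, and tightness follows. Pass to a subsequential limit \(\nu\). The barycentric cost functional \(\eta\mapsto\sum_i\lambda_iW_2^2(\eta,\mu_i)\) converges in the \(\Gamma\)-sense, since the costs converge uniformly on compact sets. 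Hence \(\nu\) is a barycenter for the norm \(F_x\). Finally, entropy is lower semicontinuous under weak convergence with converging reference measures on bounded supports, and the right-hand entropies converge. Together these give \(\Ent(\nu)\le\sum_i\lambda_i\Ent(\mu_i)\), and the curvature term vanishes because \(\varepsilon^2K\to0\) while the costs stay bounded.

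I expect the main obstacle to be the localization of the barycenters. The space \(M\) may be noncompact or have small injectivity radius, and the rescaled barycenter must not escape the chart where \(\Phi_\varepsilon\) is defined. The quantitative bound I will aim for is that every point of \(\supp\bar\mu^\varepsilon\) lies within distance \(C\varepsilon\) of \(x\). To get it, I plan to use that a barycenter is concentrated on minimizers of \(y\mapsto\sum_i\lambda_i\mathsf d^2(y,x_i)\) with \(x_i\in\supp\mu_i^\varepsilon\) (its multi-marginal characterization). Any such minimizer lies within \(\max_i\mathsf d(x_1,x_i)\) of \(x_1\), which is \(O(\varepsilon)\).
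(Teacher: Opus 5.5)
Your proposal is correct and follows the paper's blow-up reduction. You rescale at $x$, note that $\mathrm{BCD}(K,\infty)$ becomes $\mathrm{BCD}(\varepsilon^2K,\infty)$ (the measure normalization cancels because $\sum_i\lambda_i=1$), pass to $(T_xM,F_x,\mathcal L^n)$, and apply Theorem~\ref{thm:main-intro}.

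The difference is the stability step. The paper invokes the stability theorem for $\mathrm{BCD}$ under measured Gromov--Hausdorff convergence (Theorem~6.6 of Han--Liu--Zhu) and remarks that it gives the Jensen inequality at least for compactly supported families. You instead prove that stability by hand in a chart, using that the limit space is explicit. The citation is shorter. Your route is self-contained and makes explicit what the paper leaves to the reference:
\begin{itemize}
\item the Jensen barycenters $\bar\mu^\varepsilon$ concentrate in an $O(\varepsilon)$-ball around $x$, so the pull-back is defined whatever the global geometry of $M$;
\item costs and reference densities converge uniformly on a fixed compact set;
\item the subsequential limit $\nu$ is a barycenter for $F_x$;
\item lower semicontinuity of entropy, together with $\varepsilon^2K\to0$, closes the argument.
\end{itemize}

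Three small corrections.
\begin{itemize}
\item For a minimizer $y$ of $\sum_i\lambda_i\mathsf d^2(\cdot,x_i)$, comparing with $x_1$ gives $\sum_i\lambda_i\mathsf d^2(y,x_i)\le\max_i\mathsf d^2(x_1,x_i)$. So some $x_j$ lies within $\max_i\mathsf d(x_1,x_i)$ of $y$, and hence $\mathsf d(y,x_1)\le2\max_i\mathsf d(x_1,x_i)$. This, rather than the bound $\max_i\mathsf d(x_1,x_i)$ you state, is what follows directly. It is still $O(\varepsilon)$, so it is all you need.
\item In the $\Gamma$-convergence step, compare only with compactly supported competitors, since only these can be pushed forward by $\Phi_\varepsilon$. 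This suffices: the same localization shows the barycentric problem for $F_x$ with compactly supported marginals has its minimum attained in a bounded set.
\item The truncation/mollification extension to arbitrary finite-entropy families is unnecessary. The families used in the proof of Theorem~\ref{thm:main-intro} can be taken smooth and compactly supported, and each has a unique barycenter. So the Jensen inequality at your limit barycenter is exactly what that proof uses.
\end{itemize}
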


	In finite-dimensional normed spaces and in the corresponding Finsler setting, BCD detects the Riemannian case.
	Thus the barycenter curvature-dimension condition gives  a new synthetic Riemannian curvature-dimension condition: it implies the usual CD condition, but excludes the Finsler examples allowed by CD.
	
	Although RCD spaces satisfy BCD (cf. \cite{HanLiuZhuBCD}), the comparison with the RCD theory would be understood as a motivation rather than as an equivalence statement. In the RCD framework, the Riemannian character of the space is imposed through infinitesimal Hilbertianity. The present theorem points to another route: the Wasserstein Jensen's inequality already carries a Riemannianity constraint. This suggests studying a curvature-dimension theory in which Wasserstein barycenters, rather than only Wasserstein geodesics, are the testing objects. In that direction, one may ask which analytic and geometric consequences follow directly from the Wasserstein Jensen's inequality, and whether it yields stability, localization, rigidity, or regularity results that are not visible from two-point displacement convexity alone.
	
	\medskip
	
	The theorem also gives a Hilbertian characterization. Kakutani's theorem \cite{Kakutani1939} detects the inner product by requiring every two-dimensional section of the unit ball to be an ellipse; the Jordan--von Neumann theorem \cite{JordanvonNeumann1935} does so through the parallelogram law. For background on Hilbertian characterizations, see \cite{Amir1986}. The characterization here comes from the Wasserstein Jensen's inequality.
	
	\medskip
	
	\phantomsection
	\subsection*{Proof strategy}\label{subsec:proof-strategy}
	The proof has two main ingredients. In the strictly convex case, the argument works directly with calibrated rank-one transport branches; in the non-strictly convex case, it exploits the convex geometry of the unit ball.

	\medskip\noindent\textit{The rank-one polarization method.}
	For every covector \(r\), the square potential \(\alpha r(x)^2/2\) generates the affine rank-one map
	\[
	x\longmapsto x-\alpha r(x)J(r).
	\]
	For small \(\alpha\) this potential is \(c\)-concave by a one-dimensional dual-norm estimate. The four-branch identity
	\[
	\Phi_{p+q}+\Phi_{p-q}-2\Phi_p-2\Phi_q=0,
	\qquad \Phi_r(x)=\frac12r(x)^2,
	\]
	produces a zero-sum calibrated family with a unique Wasserstein barycenter. Strict convexity gives uniqueness, and the BCD condition can then be applied directly. Applying BCD to the family gives the parallelogram equality.
	
	\medskip\noindent\textit{The maximal-face method.} To prove strict convexity, we argue by contradiction. If the unit ball has a non-trivial flat face, then a supporting covector exposes a positive-dimensional face. In this face we choose two extremal translation anchors and a third face-valued perturbation. The anchors force the uniqueness of the Wasserstein barycenter, while the third branch gives a strict first-order entropy decrease through the Jacobian formula. This contradicts BCD and rules out flat faces.

	\medskip
	
	\phantomsection
	\subsection*{Organization.}\label{subsec:organization}
	Section~\ref{sec:preliminaries} collects preliminaries on norms, \(c\)-concavity, barycenters, calibrated maps, and entropy under changes of variables. Section~\ref{sec:strict-case} proves the Hilbertian conclusion under the additional assumption of strict convexity. Section~\ref{sec:strict-convexity} proves that BCD rules out flat faces of the unit ball.  Combining these two parts we  prove Theorem~\ref{thm:main-intro} and then prove Corollary~\ref{cor:finsler-exclusion}.

	\section{Preliminaries}\label{sec:preliminaries}
	
	\subsection{Norms, duality, and the Legendre map}\label{subsec:norms-duality}
	
	Let $E^*$ be the dual space and let $\|\cdot\|_*$ be the dual norm. We use basic Fenchel duality in the form standard in convex analysis \cite{Rockafellar1970}. Throughout the paper we set
	\[
	h(v)=\frac12\|v\|^2,
	\qquad c(x,y)=h(y-x).
	\]
	This only rescales the barycenter functional and does not affect barycenters. For the case \(K=0\), which is the only case used in the proof of the main theorem, it also leaves the Jensen inequality unchanged; for general \(K\) it only changes the normalization of the curvature parameter.
	For probability measures \(\mu,\nu\in\cP_2(E)\), we write
	\[
	\mathsf C(\mu,\nu):=\inf_{\pi\in\Pi(\mu,\nu)}
	\int_{E\times E} c(x,y)\,\dd\pi(x,y).
	\]
	Thus \(\mathsf C=\frac12 W_2^2\) for the 2-Wasserstein distance induced by \(\|\cdot\|\).
	
	The convex conjugate of $h$ is
	\[
	h^*(p)=\frac12\|p\|_*^2.
	\]
	If $\|\cdot\|$ is strictly convex, by Fenchel duality and the differentiability criterion for finite convex functions; see Rockafellar~\cite[Theorems 23.5 and 25.1]{Rockafellar1970}, $h^*$ is differentiable on $E^*$, and we write
	\[
	J(p):=\D h^*(p)\in E.
	\]
	The map $J$ is continuous, odd, and one-homogeneous:
	\[
	J(\lambda p)=\lambda J(p),\qquad \lambda\ge0,
	\qquad J(-p)=-J(p).
	\]
	Moreover,
	\[
	J(p)\in\partial h^*(p),
	\qquad p\in\partial h(J(p)).
	\]
	Conversely, if $p\in\partial h(v)$ and $h^*$ is differentiable at $p$, then $v=J(p)$.
	
	\subsection{$c$-concavity and optimal maps}\label{subsec:c-concavity}
	
	For a function $\psi:E\to\R\cup\{-\infty\}$, define the $c$-transform by
	\[
	\psi^c(y):=\inf_{x\in E}\{c(x,y)-\psi(x)\}.
	\]
	A function $\psi$ is $c$-concave if $\psi=\chi^c$ for some $\chi$. Equivalently, $\psi$ admits global $c$-supports: for every $x$ there exists $y$ such that
	\[
	\psi(z)\le c(z,y)-c(x,y)+\psi(x)
	\qquad\forall z\in E.
	\]
	The $c$-subdifferential $\partial^c\psi(x)$ is the set of such $y$.
	
	\begin{remark}
		For the cost $c(x,y)=h(y-x)$, $c$-concavity is not the same as usual concavity, except in special quadratic coordinates. In Kantorovich duality, a $c$-support at $x$ is a translated copy of the cost profile $h$, rather than an affine supporting hyperplane.
	\end{remark}
	
	If $\psi$ is differentiable at $x$ and $y\in\partial^c\psi(x)$, then
	\[
	\D\psi(x)\in -\partial h(y-x).
	\]
	If $\|\cdot\|$ is strictly convex, equivalently $h^*$ is differentiable, this gives the unique displacement
	\[
	y-x=J(-\D\psi(x)).
	\]
	
	\begin{lemma}[Negative convex functions are $c$-concave]
		\label{lem:negative-convex}
		Let $F:E\to\R$ be convex. Then $-F$ is $c$-concave.
	\end{lemma}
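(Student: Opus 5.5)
The plan is to use the equivalent characterization of $c$-concavity via global $c$-supports recorded just above: for every $x\in E$ we exhibit some $y\in E$ with
\[
-F(z)\le c(z,y)-c(x,y)-F(x)\qquad\forall z\in E.
\]
Since $F$ is convex and finite on the finite-dimensional space $E$, it is continuous and has nonempty subdifferential everywhere (Rockafellar~\cite[Theorem 23.4]{Rockafellar1970}). Fix $x$ and a subgradient $p\in\partial F(x)\subset E^*$. Then $F(z)\ge F(x)+p(z-x)$, so
\[
-F(z)\le -F(x)-p(z-x).
\]
It therefore suffices to choose $y$ so that the affine minorant is dominated by the cost difference:
\[
-p(z-x)\le h(y-z)-h(y-x)\qquad\forall z\in E.
\]

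The natural choice is a $y$ for which $-p$ is (up to sign conventions) a subgradient of $u\mapsto h(y-u)$ at $u=x$. Choose $v\in E$ with $p\in\partial h(v)$. Such a $v$ exists because $h^*$ is finite and convex on $E^*$, so $\partial h^*(p)\neq\emptyset$, and $v\in\partial h^*(p)$ is equivalent to $p\in\partial h(v)$. Explicitly, one may take $v$ with $\|v\|=\|p\|_*$ and $p(v)=\|p\|_*^2$. Set $y:=x+v$.

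The subgradient inequality for $h$ at $v$ then gives, for every $w\in E$,
\[
h(w)\ge h(v)+p(w-v).
\]
Taking $w=y-z=v-(z-x)$ yields
\[
h(y-z)\ge h(y-x)-p(z-x),
\]
which is exactly the required inequality. Hence $y\in\partial^c(-F)(x)$ for every $x$, and $-F$ is $c$-concave.

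The argument needs no strict convexity, and the only point needing care is this sign bookkeeping: the displacement must be $y-x=v$ with $p\in\partial h(v)$, not $-v$. This is consistent with the earlier formula $\D\psi(x)\in-\partial h(y-x)$ applied to $\psi=-F$, which gives $-\D F(x)\in-\partial h(y-x)$, i.e.\ $\D F(x)\in\partial h(y-x)$. Nothing else is delicate.

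If one prefers the definition $\psi=\chi^c$ directly, note that global supports at every point give $-F=\chi^c$ with $\chi(y)=\inf_z\{c(z,y)+F(z)\}$. This function is finite at every $y$ produced above and $+\infty$ elsewhere; equivalently one may take $\chi:=(-F)^c$ restricted to those $y$. This is the standard equivalence quoted in the preliminaries.
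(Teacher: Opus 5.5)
Your proof is correct and is essentially the paper's: take $p\in\partial F(x)$, pick $v$ in $\partial h^*$ of the appropriate sign, and combine the two subgradient inequalities. Your choice $y=x+v$ with $p\in\partial h(v)$ is the same point as the paper's $y=x_0-v$ with $-p\in\partial h(v)$, because $h$ is even. One small slip is in your optional last paragraph: $\chi(y)=\inf_z\{c(z,y)+F(z)\}$ is finite for every $y$, since the quadratic growth of $h$ beats the affine minorant of $F$, so it is never $+\infty$. No restriction is needed anyway, because the support inequality at each $x$ already gives $(-F)^{cc}=-F$.
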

	
	\begin{proof}
		Fix $x_0$ and choose $p\in\partial F(x_0)$. Choose $v\in\partial h^*(-p)$, equivalently $-p\in\partial h(v)$, and set $y=x_0-v$. Convexity of $F$ gives
		\[
		F(z)-F(x_0)\ge p(z-x_0),
		\]
		and the subgradient inequality for $h$ gives
		\[
		h(z-y)-h(x_0-y)\ge -p(z-x_0).
		\]
		Combining these inequalities yields
		\[
		-F(z)\le h(z-y)-h(x_0-y)-F(x_0)
		=c(z,y)-c(x_0,y)-F(x_0).
		\]
		Thus $-F$ admits a global $c$-support at every $x_0$.
	\end{proof}

	We shall use the following dual calibration principle, whose idea goes back to \cite{CarlierEkeland2010}.
	
	\begin{lemma}[Zero-sum calibration]
		\label{lem:dual-calibration}
		Let \(E=\mathbb R^n\), let \(c(x,y)=h(y-x)\), and let
		\(\lambda_1,\ldots,\lambda_m>0\) satisfy \(\sum_i\lambda_i=1\).
		Let \(\psi_i:E\to\mathbb R\) be \(c\)-concave functions such that
		\[
		\sum_{i=1}^m \lambda_i\psi_i=0.
		\]
		Set \(\chi_i:=\psi_i^c\). Assume that all integrals appearing below are well-defined; this will be the
		case in all applications, where the potentials are affine or quadratic and the
		measures have finite second moments. Let \(\nu\in\mathcal P_2(E)\), and suppose that
		there are maps \(T_i:E\to E\) such that
		\[
		T_i(x)\in\partial^c\psi_i(x)
		\qquad\text{for \(\nu\)-a.e. }x.
		\]
		Define
		\[
		\mu_i:=(T_i)_\#\nu.
		\]
		Then \(\nu\) is a barycenter of \((\mu_i)_{i=1}^m\) with weights
		\((\lambda_i)_{i=1}^m\), namely
		\[
		\nu\in\operatorname*{argmin}_{\eta\in\mathcal P_2(E)}
		\sum_{i=1}^m\lambda_i \mathsf C(\eta,\mu_i).
		\]
		
	\end{lemma}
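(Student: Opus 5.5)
The argument is the standard Kantorovich-duality lower bound: a zero-sum family of dual potentials certifies optimality of \(\nu\). We compare the barycenter functional at \(\nu\) with its value at an arbitrary competitor \(\eta\in\mathcal P_2(E)\), using the potentials \((\psi_i,\chi_i)\) as dual certificates.

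\emph{Step 1 (lower bound for every competitor).} Since \(\chi_i=\psi_i^c\), we have \(\psi_i(x)+\chi_i(y)\le c(x,y)\) for all \(x,y\). For any \(\pi\in\Pi(\eta,\mu_i)\), integrating this against \(\pi\) gives
\[
\int c\,\dd\pi\ \ge\ \int\psi_i\,\dd\eta+\int\chi_i\,\dd\mu_i .
\]
Taking the infimum over \(\pi\) yields \(\mathsf C(\eta,\mu_i)\ge\int\psi_i\,\dd\eta+\int\chi_i\,\dd\mu_i\). We then multiply by \(\lambda_i\) and sum over \(i\). The hypothesis \(\sum_i\lambda_i\psi_i=0\) kills the \(\eta\)-dependent terms, because \(\int\sum_i\lambda_i\psi_i\,\dd\eta=0\); this uses integrability of each \(\psi_i\) against \(\eta\), which the statement assumes. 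The result is
\[
\sum_i\lambda_i\mathsf C(\eta,\mu_i)\ \ge\ \sum_i\lambda_i\int\chi_i\,\dd\mu_i=:L,
\]
a constant independent of \(\eta\).

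\emph{Step 2 (equality at \(\nu\)).} Let \(\eta=\nu\) and use the transport plans \(\pi_i=(\id,T_i)_\#\nu\in\Pi(\nu,\mu_i)\). Since \(T_i(x)\in\partial^c\psi_i(x)\), we have \(\psi_i(x)+\chi_i(T_i(x))=c(x,T_i(x))\) for \(\nu\)-a.e.\ \(x\). To justify this, note that the \(c\)-support inequality \(\psi_i(z)\le c(z,y)-c(x,y)+\psi_i(x)\) with \(y=T_i(x)\) gives \(\chi_i(y)=\inf_z\{c(z,y)-\psi_i(z)\}\ge c(x,y)-\psi_i(x)\). The reverse inequality is trivial from the definition of \(\chi_i\) by taking \(z=x\). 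Hence
\[
\mathsf C(\nu,\mu_i)\le\int c(x,T_i(x))\,\dd\nu=\int\psi_i\,\dd\nu+\int\chi_i\circ T_i\,\dd\nu=\int\psi_i\,\dd\nu+\int\chi_i\,\dd\mu_i .
\]
Summing with weights and again using \(\sum_i\lambda_i\psi_i=0\) gives \(\sum_i\lambda_i\mathsf C(\nu,\mu_i)\le L\). Combined with Step 1, this shows that \(\nu\) attains the minimum. As a byproduct, each \(\pi_i\) is an optimal plan.

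\emph{Main obstacle.} The only delicate point is measure-theoretic: the integrals \(\int\psi_i\,\dd\eta\) and \(\int\chi_i\,\dd\mu_i\) must be finite, or at least not of the form \(\infty-\infty\), so that the sums can be split and the cancellation is legitimate. We also need \(\mu_i\in\mathcal P_2\). Both points are covered by the standing integrability assumption in the statement. In the intended applications the potentials have at most quadratic growth, so \(\chi_i\) also has at most quadratic growth, and finite second moments suffice.
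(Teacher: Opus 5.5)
Your proof is correct and follows the paper's argument essentially verbatim: the duality inequality $\psi_i(x)+\chi_i(y)\le c(x,y)$ together with $\sum_i\lambda_i\psi_i=0$ gives an $\eta$-independent lower bound, and the calibrated plans $(\id,T_i)_\#\nu$ attain it. The differences are cosmetic. You spell out why $T_i(x)\in\partial^c\psi_i(x)$ forces $\psi_i(x)+\chi_i(T_i(x))=c(x,T_i(x))$. You also use only $\mathsf C(\nu,\mu_i)\le\int c(x,T_i(x))\,\dd\nu$ at $\nu$ and deduce optimality of each plan afterwards, whereas the paper asserts that equality directly.
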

	
	\begin{proof}
		Since \(\chi_i=\psi_i^c\), we have
		\[
		\psi_i(x)+\chi_i(y)\le c(x,y)
		\qquad\forall x,y\in E.
		\]
		Therefore, for every \(\eta\in\mathcal P_2(E)\) and every
		\(\pi\in\Pi(\eta,\mu_i)\),
		\[
		\int c(x,y)\,\dd\pi(x,y)
		\ge
		\int\psi_i\,\dd\eta+\int\chi_i\,\dd\mu_i.
		\]
		Taking the infimum over \(\pi\) gives
		\[
		\mathsf C(\eta,\mu_i)
		\ge
		\int\psi_i\,\dd\eta+\int\chi_i\,\dd\mu_i.
		\]
		After multiplying by \(\lambda_i\) and summing in \(i\), we obtain
		\[
		\begin{aligned}
			\sum_{i=1}^m\lambda_i \mathsf C(\eta,\mu_i)
			&\ge
			\int\sum_{i=1}^m\lambda_i\psi_i\,\dd\eta
			+
			\sum_{i=1}^m\lambda_i\int\chi_i\,\dd\mu_i \\
			&=
			\sum_{i=1}^m\lambda_i\int\chi_i\,\dd\mu_i.
		\end{aligned}
		\]
		On the other hand, since
		\[
		T_i(x)\in\partial^c\psi_i(x),
		\]
		we have
		\[
		\psi_i(x)+\chi_i(T_i(x))=c(x,T_i(x))
		\qquad\text{for \(\nu\)-a.e. }x.
		\]
		Thus \((\id,T_i)_\#\nu\) is optimal from \(\nu\) to \(\mu_i\), and
		\[
		\mathsf C(\nu,\mu_i)
		=
		\int c(x,T_i(x))\,\dd\nu(x)
		=
		\int\psi_i\,\dd\nu+\int\chi_i\,\dd\mu_i.
		\]
		Averaging again and using \(\sum_i\lambda_i\psi_i=0\), we get
		\[
		\sum_{i=1}^m\lambda_i \mathsf C(\nu,\mu_i)
		=
		\sum_{i=1}^m\lambda_i\int\chi_i\,\dd\mu_i.
		\]
		Hence \(\nu\) attains the lower bound and is a barycenter.
	\end{proof}
	
	\subsection{A lemma in convex geometry}\label{subsec:convex-geometry-lemma}
	In Section~\ref{sec:strict-convexity}, we use this lemma to prove the uniqueness of the Wasserstein barycenter of a particular triple of measures $\{\mu_1,\mu_2,\mu_3\}$ in the non-strictly convex setting. The lemma and its proof are well known to experts, but we include them for completeness.
	\begin{lemma}\label{lem}
		Let \(F \subset \mathbb{R}^n\) be a nonempty compact convex set. Then there exists a nonzero linear functional
		\[
		\ell \in (\mathbb{R}^n)^*
		\]
		such that \(\ell\) attains both its maximum and its minimum on \(F\) at unique points.
	\end{lemma}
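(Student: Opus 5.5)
We need a nonzero linear functional \(\ell\) whose maximum and minimum over \(F\) are each attained at exactly one point. The plan is to pick \(\ell\) generically, using the fact that the set of directions exposing non-singleton faces of \(F\) has Lebesgue measure zero.

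If \(F\) is a single point, any nonzero \(\ell\) works trivially. Otherwise, consider the support function \(\sigma_F(\ell)=\max_{x\in F}\ell(x)\). It is a finite convex function on \((\mathbb R^n)^*\), since \(F\) is compact and nonempty. By the standard fact already quoted in Section~\ref{subsec:norms-duality} (Rockafellar, Theorem~25.1), \(\sigma_F\) is differentiable at \(\ell\) if and only if its subdifferential \(\partial\sigma_F(\ell)\) is a singleton. Moreover,
\[
\partial\sigma_F(\ell)=\argmax_{x\in F}\ell(x)=:F_\ell ,
\]
the exposed face of \(F\) in direction \(\ell\). This is the usual Danskin/support-function identity; compactness guarantees the maximum is attained.

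By Rademacher's theorem, or directly by Rockafellar Theorem~25.5, the convex function \(\sigma_F\) is differentiable outside a Lebesgue-null set \(N\subset(\mathbb R^n)^*\). The minimum of \(\ell\) on \(F\) equals \(-\sigma_F(-\ell)\), and the minimizers form \(F_{-\ell}\). So \(\ell\) has a unique minimizer exactly when \(-\ell\notin N\), i.e. \(\ell\notin -N\).

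The union \(N\cup(-N)\cup\{0\}\) is still null. Hence we can choose \(\ell\) outside it. Then \(\ell\neq0\), and both \(F_\ell\) and \(F_{-\ell}\) are singletons, which is the claim. The only point needing care is the identification of \(\partial\sigma_F(\ell)\) with the exposed face. To check it, note that \(x\in F_\ell\) gives \(\sigma_F(m)\ge m(x)=\sigma_F(\ell)+(m-\ell)(x)\), so \(x\) is a subgradient. Conversely, \(\sigma_F\) is the conjugate of the indicator of the closed convex set \(F\), so \(\partial\sigma_F(\ell)\) consists of the points of \(F\) where \(\ell\) attains its maximum.

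An alternative route avoids measure theory by lexicographic exposure. Choose \(\ell_1\) and restrict to \(F_{\ell_1}\), then perturb by \(\varepsilon\ell_2\), and so on. This is messier, so the generic-differentiability argument above is the one to use.
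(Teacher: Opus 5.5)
Your proposal is correct and follows essentially the same route as the paper. Both arguments identify $\partial h_F(\ell)$ with the exposed face $\argmax_{F}\ell$, use Rademacher to get differentiability of the support function off a null set, and choose $\ell$ outside $N\cup(-N)\cup\{0\}$; the paper phrases this last step as $\ell\in D_F\cap D_{-F}$, which is the same condition because $h_{-F}(\ell)=h_F(-\ell)$.
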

	
	\begin{proof}
		Define the support function of \(F\) by
		\[
		h_F(\ell):=\max_{x\in F}\ell(x),
		\qquad
		\ell\in (\mathbb{R}^n)^*.
		\]
		Since \(F\) is compact, \(h_F\) is finite everywhere. Moreover, since \(F\) is bounded, \(h_F\) is a convex Lipschitz function.
		
		We first observe that
		\[
		\partial h_F(\ell)
		=
		\argmax_{x\in F}\ell(x),
		\]
		where \(\mathbb{R}^n\) is identified with its bidual.
		
		Indeed, let \(x\in F\) satisfy
		\[
		\ell(x)=h_F(\ell).
		\]
		Then, for every \(m\in(\mathbb{R}^n)^*\),
		\[
		h_F(m)\geq m(x)
		=
		\ell(x)+(m-\ell)(x)
		=
		h_F(\ell)+(m-\ell)(x).
		\]
		Hence \(x\in\partial h_F(\ell)\).
		
		Conversely, suppose that \(x\in\partial h_F(\ell)\). Since \(h_F\) is the Fenchel conjugate of the indicator function of \(F\),
		\[
		h_F^*=\iota_F,
		\]
		the Fenchel equality implies that \(x\in F\). Taking \(m=0\) in the subgradient inequality gives
		\[
		0=h_F(0)
		\geq
		h_F(\ell)-\ell(x),
		\]
		and therefore
		\[
		\ell(x)\geq h_F(\ell).
		\]
		Since \(x\in F\), the reverse inequality is automatic, and thus
		\[
		\ell(x)=h_F(\ell).
		\]
		This proves the subdifferential identity.
		
		For a finite convex function on a finite-dimensional space, differentiability at a point is equivalent to the subdifferential at that point being a singleton. Therefore,
		\[
		h_F \text{ is differentiable at } \ell
		\quad\Longleftrightarrow\quad
		\argmax_{x\in F}\ell(x)
		\text{ is a singleton}.
		\]
		
		By the Rademacher theorem for Lipschitz functions, the complement of the set
		\[
		D_F
		:=
		\left\{
		\ell\in(\mathbb{R}^n)^*:
		h_F \text{ is differentiable at }\ell
		\right\}
		\]
		is a Lebesgue-null set. The same is true for \(-F\).
		
		Thus,
		\[
		D_F\cap D_{-F}
		\]
		contains a nonzero functional \(\ell\). For such an \(\ell\), the functional \(\ell\) has a unique maximizer on \(F\) and a unique minimizer on \(F\).
	\end{proof}

	\subsection{Jacobian formulas for entropy}\label{sec:entropy-jacobian}
	
	The entropy computations used below are based on finite-dimensional change-of-variables formulas. We state them here to avoid ambiguity with Wasserstein first variation formulas.
	
	\begin{lemma}[Entropy under a change of variables]
		\label{lem:entropy-change-of-variables}
		Let \(\nu=\rho\mathcal L^n\) be an absolutely continuous probability measure with finite entropy. Let \(S\) be a \(C^1\)-diffeomorphism from an open neighbourhood of \(\supp\nu\) onto its image, and assume
		\[
		\det \D S(x)>0
		\qquad\text{for all }x\in\supp\nu.
		\]
		Set \(\nu_S:=S_\#\nu\). Then
		\[
		\Ent(\nu_S)
		=\Ent(\nu)-\int_E \log\det\D S(x)\,\dd\nu(x).
		\]
	\end{lemma}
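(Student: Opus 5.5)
The formula follows from the classical change-of-variables theorem applied to the push-forward density. First I identify the density of \(\nu_S\), and then substitute it into the entropy integral.

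\emph{Step 1: the density of \(\nu_S\).} Let \(U\) be the open neighbourhood of \(\supp\nu\) on which \(S\) is a \(C^1\)-diffeomorphism onto \(V:=S(U)\). Since \(\det\D S>0\) on the closed set \(\supp\nu\) and \(\det \D S\) is continuous, I may shrink \(U\) so that \(\det\D S>0\) on all of \(U\); this does not affect \(\nu\), which is concentrated on \(\supp\nu\). For a bounded Borel \(f\), the change-of-variables formula \(y=S(x)\) gives
\[
\int_V f(y)\,\rho(S^{-1}(y))\,\bigl(\det\D S(S^{-1}(y))\bigr)^{-1}\,\dd y=\int_U f(S(x))\rho(x)\,\dd x=\int f\,\dd\nu_S .
\]
Hence \(\nu_S\ll\mathcal L^n\), with density
\[
\rho_S(y)=\frac{\rho(S^{-1}(y))}{\det\D S(S^{-1}(y))}\ \text{ on } V,
\qquad \rho_S=0 \text{ outside } V .
\]

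\emph{Step 2: the entropy identity.} I apply the same change of variables to the nonnegative-part and negative-part decomposition of \(\rho_S\log\rho_S\), which gives
\[
\Ent(\nu_S)=\int_V \rho_S\log\rho_S\,\dd y=\int_U \rho(x)\log\frac{\rho(x)}{\det\D S(x)}\,\dd x .
\]
Splitting the logarithm yields \(\Ent(\nu)-\int\log\det\D S\,\dd\nu\), with the convention \(0\log 0=0\) on \(\{\rho=0\}\).

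\emph{Step 3: integrability, the main obstacle.} The split in Step 2 is legitimate only if \(\int\log\det\D S\,\dd\nu\) is well defined, so that no expression of the form \(\infty-\infty\) appears.
\begin{itemize}
\item \emph{Compactly supported case.} If \(\supp\nu\) is compact, which is the situation in every application in the paper, then \(\log\det\D S\) is continuous and hence bounded on \(\supp\nu\). The integral is then finite and the identity holds with \(\Ent(\nu_S)\) finite.
\item \emph{General case.} I state the formula under the proviso that \(\log\det\D S\in L^1(\nu)\), or at least that its integral is well defined. Under that proviso the left-hand side is defined, since its integrand equals \(\rho\log\rho\) minus \(\rho\log\det\D S\), and both pieces are integrable, apart from at most one infinite sign.
\end{itemize}

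Apart from this integrability bookkeeping, the argument is routine.
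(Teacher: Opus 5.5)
Your proof is correct and follows the paper's argument: you identify the density of \(\nu_S\) through the Jacobian identity \(\rho_S(S(x))\det\D S(x)=\rho(x)\), then change variables in the entropy integral. Your Step 3 makes explicit a point the paper's two-line proof passes over silently: \(\int\log\det\D S\,\dd\nu\) is finite when \(\supp\nu\) is compact, and in general it must be well defined for the identity to make sense.
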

	
	\begin{proof}
		Let \(\rho_S\) be the density of \(\nu_S\). The Jacobian identity gives
		\[
		\rho_S(S(x))\det\D S(x)=\rho(x)
		\qquad\text{for }\nu\text{-a.e. }x.
		\]
		Changing variables in the entropy integral yields
		\[
		\begin{aligned}
			\Ent(\nu_S)
			&=\int_E \rho_S(S(x))\log\rho_S(S(x))\det\D S(x)\,\dd x \\
			&=\int_E \rho(x)\bigl(\log\rho(x)-\log\det\D S(x)\bigr)\,\dd x,
		\end{aligned}
		\]
		which is the claim.
	\end{proof}
	
	\begin{corollary}[Smooth perturbations]
		\label{cor:smooth-perturbation-entropy}
		Let \(\nu=\rho\mathcal L^n\) be as in Lemma~\ref{lem:entropy-change-of-variables}. Let
		\[
		V=v_0+Z,
		\qquad v_0\in E,\quad Z\in C_c^1(E;E).
		\]
		For \(|t|\) sufficiently small define \(S_t(x):=x+tV(x)\). Then
		\[
		\Ent((S_t)_\#\nu)
		=\Ent(\nu)-\int_E\log\det(I+t\D V(x))\,\dd\nu(x),
		\]
		and consequently
		\[
		\Ent((S_t)_\#\nu)
		=\Ent(\nu)-t\int_E \operatorname{div}V\,\dd\nu+O(t^2).
		\]
		Equivalently, if \(\dd\nu=\rho\,\dd x\), then
		\[
		\Ent((S_t)_\#\nu)
		=\Ent(\nu)-t\int_E \rho\,\operatorname{div}V\,\dd x+O(t^2).
		\]
	\end{corollary}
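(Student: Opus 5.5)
We apply Lemma~\ref{lem:entropy-change-of-variables} with $S=S_t$ and then expand the resulting log-determinant in $t$; the work is in checking the lemma's hypotheses uniformly for small $|t|$.

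\textbf{Step 1: $S_t$ is a $C^1$-diffeomorphism with positive Jacobian.} Since $Z\in C_c^1(E;E)$, the map $\D V=\D Z$ is continuous and compactly supported, so $L:=\sup_x\|\D V(x)\|_{\mathrm{op}}<\infty$.

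For $|t|L<1$ the map $x\mapsto tV(x)$ is a contraction plus a constant, because $t(V(x)-V(y))=t(Z(x)-Z(y))$. Hence $S_t=\id+tV$ is injective and is a $C^1$-diffeomorphism of $E$ onto $E$; surjectivity follows from the Banach fixed point theorem applied to $x\mapsto y-tV(x)$.

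Moreover $\det(I+t\D V(x))>0$ for all $x$ when $|t|L<1$. Indeed, $t\mapsto\det(I+t\D V(x))$ is continuous, equals $1$ at $t=0$, and never vanishes on this interval since $I+t\D V(x)$ is invertible. Thus Lemma~\ref{lem:entropy-change-of-variables} applies with the open neighbourhood equal to all of $E$, and gives the first identity directly.

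\textbf{Step 2: expansion of the log-determinant.} For a matrix $A$ with $\|A\|_{\mathrm{op}}\le 1/2$ we have
\[
\log\det(I+A)=\operatorname{tr}A+R(A),\qquad |R(A)|\le C_n\|A\|_{\mathrm{op}}^2,
\]
with $C_n$ depending only on $n$. This follows by writing $\log\det(I+A)=\sum_j\log(1+\lambda_j)$ over the eigenvalues of $A$, which have modulus at most $1/2$, or from the series for $\log(I+A)$.

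Take $A=t\D V(x)$ and $|t|\le 1/(2L)$. Then, uniformly in $x$,
\[
\log\det(I+t\D V(x))=t\operatorname{div}V(x)+O(t^2),
\]
and the $O(t^2)$ term vanishes outside $\supp Z$. Since $\nu$ is a probability measure and both $\operatorname{div}V=\operatorname{div}Z$ and the remainder are bounded, integrating against $\nu$ gives the expansion with an $O(t^2)$ error bounded by $C_nL^2t^2$. The final form is just $\dd\nu=\rho\,\dd x$.

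\textbf{Main obstacle.} The only delicate point is the global injectivity and positivity of the Jacobian in Step 1, so that Lemma~\ref{lem:entropy-change-of-variables} is legitimately applicable. The compact support of $Z$, which makes $L$ finite, is exactly what handles this. The entropy of the pushforward is then finite automatically: it equals $\Ent(\nu)$ minus a bounded integral.
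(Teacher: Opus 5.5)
Your proof is correct and follows the same route as the paper: apply Lemma~\ref{lem:entropy-change-of-variables} to \(S_t\), then expand \(\log\det(I+t\D V)\) using the boundedness of \(\D V\). You also supply details the paper leaves implicit, namely that \(S_t\) is a global \(C^1\)-diffeomorphism (by the contraction argument), that the Jacobian is positive (by continuity in \(t\)), and an explicit \(O(t^2)\) remainder bound.
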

	
	\begin{proof}
		For \(|t|\) small, \(S_t\) is a \(C^1\)-diffeomorphism on a neighbourhood of \(\supp\nu\). Lemma~\ref{lem:entropy-change-of-variables} gives the exact logarithmic determinant formula. The expansion follows from
		\[
		\log\det(I+tA)=t\operatorname{tr}A+O(t^2\|A\|^2),
		\]
		uniformly for \(A=\D V(x)\), since \(\D V\) is bounded.
	\end{proof}
	
	\begin{corollary}[Affine maps]
		\label{cor:affine-entropy}
		Let \(A:E\to E\) be an invertible linear map with \(\det A>0\), and let \(b\in E\). Then, for every absolutely continuous probability measure \(\nu\) with finite entropy,
		\[
		\Ent((x\mapsto Ax+b)_\#\nu)
		=\Ent(\nu)-\log\det A.
		\]
	\end{corollary}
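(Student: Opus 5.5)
This is a direct application of Lemma~\ref{lem:entropy-change-of-variables} to the affine map \(S(x)=Ax+b\); the plan has two steps.

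\emph{Step 1: check the hypotheses.} The map \(S\) is a \(C^\infty\)-diffeomorphism of all of \(E\) onto \(E\), because \(A\) is invertible. In particular it is a diffeomorphism from an open neighbourhood of \(\supp\nu\), namely \(E\) itself, onto its image. Its differential is constant, \(\D S(x)=A\), so \(\det\D S(x)=\det A>0\) holds at every point, in particular on \(\supp\nu\). The measure \(\nu\) is absolutely continuous with finite entropy by assumption. Hence Lemma~\ref{lem:entropy-change-of-variables} applies.

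\emph{Step 2: evaluate the correction term.} The lemma gives
\[
\Ent(S_\#\nu)=\Ent(\nu)-\int_E\log\det A\,\dd\nu(x).
\]
The integrand is the constant \(\log\det A\) and \(\nu\) is a probability measure, so the integral equals \(\log\det A\). This is exactly the claimed formula.

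There is essentially no obstacle. The only point to note is that the Jacobian identity \(\rho_S(S(x))\det A=\rho(x)\) used inside the lemma makes \(S_\#\nu\) absolutely continuous with density \(\rho\circ S^{-1}/\det A\). The change of variables then shows the new entropy is finite, so the right-hand side is a well-defined real number and no \(\infty-\infty\) ambiguity arises.
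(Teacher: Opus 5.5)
Your proposal is correct and follows the paper's intended argument. The paper states this corollary without a separate proof, as an immediate consequence of Lemma~\ref{lem:entropy-change-of-variables}: apply it to \(S(x)=Ax+b\), whose Jacobian is the constant \(\det A>0\), and integrate that constant against the probability measure \(\nu\) to get \(\log\det A\).
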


	\section{The strictly convex case}\label{sec:strict-case}
	
	In this section, assume that a finite-dimensional normed space \((E,\|\cdot\|,\mathcal L^n)\) satisfies \(\mathrm{BCD}(0,\infty)\) and that $\|\cdot\|$ is strictly convex. Strict convexity gives uniqueness for the Wasserstein barycenters used below, so the BCD condition can be applied directly.
	Strict convexity of the primal norm implies that
	\[
	h^*(p)=\frac12\|p\|_*^2
	\]
	is differentiable on \(E^*\). We write
	\[
	J:=\D h^*:E^*\to E.
	\]
	The purpose of this section is to prove that \(\|\cdot\|_*\) satisfies the parallelogram
	identity. This will imply that the dual norm, and hence also the original norm, is
	Hilbertian.
	
	The perturbations used below are rank-one affine perturbations generated by square functions
	\[
	x\longmapsto \frac12 r(x)^2,
	\qquad r\in E^*.
	\]
	Each branch is an extendable Wasserstein geodesic. The entropy computations are affine Jacobian computations covered by Corollary~\ref{cor:affine-entropy}.
	
	\subsection{Rank-one square potentials}\label{subsec:rank-one-square-potentials}
	
	For \(r\in E^*\), set
	\[
	m_r^2:=r(J(r))=\|r\|_*^2.
	\]
	The equality follows from the two-homogeneity of \(h^*\). If \(r=0\), then
	\(m_r=0\) and all formulas below are interpreted in the trivial sense.
	
	For \(\alpha\in\R\), define
	\[
	\Phi_{\alpha,r}(x):=\frac{\alpha}{2}\,r(x)^2,
	\]
	and define the affine rank-one map
	\[
	T_{\alpha,r}(x):=x-\alpha r(x)J(r).
	\]
	Thus \(T_{\alpha,r}=I-\alpha J(r)\otimes r\).
	Throughout this subsection we use the strict convexity assumption, so that
	\(h^*\) is differentiable on \(E^*\). 
	
	\begin{lemma}[Rank-one square potentials are calibrated]
		\label{lem:rank-one-square-calibration}
		Let \(r\in E^*\) and \(\alpha\in\R\). Assume
		\begin{equation}\label{eq:rank-one-admissibility}
			\alpha m_r^2<1.
		\end{equation}
		Then \(\Phi_{\alpha,r}\) is \(c\)-concave and
		\[
		T_{\alpha,r}(x)\in\partial^c\Phi_{\alpha,r}(x)
		\qquad\forall x\in E.
		\]
		Moreover the equality set is single-valued:
		\[
		\partial^c\Phi_{\alpha,r}(x)=\{T_{\alpha,r}(x)\}
		\qquad\forall x\in E.
		\]
	\end{lemma}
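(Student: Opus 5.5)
The plan is to verify the global $c$-support inequality directly at the candidate point $y:=T_{\alpha,r}(x)$, and then get uniqueness from the differentiability criterion recorded in Section~\ref{subsec:c-concavity}. The case $r=0$ is trivial: then $\Phi_{\alpha,r}\equiv0$ and $T_{\alpha,r}=\id$. So assume $r\neq0$, hence $m:=m_r=\|r\|_*>0$, and set $a:=\alpha m^2<1$. Note that $\alpha$ may have either sign. When $\alpha\le0$ one could instead invoke Lemma~\ref{lem:negative-convex}, but a single computation covers every $\alpha$ with $a<1$.

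\textbf{Step 1: two one-dimensional facts.} Since $J(r)=\D h^*(r)$, one-homogeneity and the identity $r(J(r))=\|r\|_*^2$ give $\|J(r)\|=m$. Writing $u:=x-y=\alpha r(x)J(r)$, this yields exactly
\[
h(u)=\tfrac12\alpha^2r(x)^2m^2 .
\]
For an arbitrary vector $w$, the dual-norm estimate $|r(w)|\le m\|w\|$ gives the lower bound
\[
h(w)\ge \frac{r(w)^2}{2m^2}.
\]

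\textbf{Step 2: reduce to a scalar inequality.} The required inequality is
\[
\Phi_{\alpha,r}(z)-\Phi_{\alpha,r}(x)\le h(z-y)-h(x-y)\qquad\forall z\in E.
\]
Put $w:=z-y=u+(z-x)$, and write $s:=r(x)$ and $t:=r(z)$. Then
\[
r(w)=as+t-s .
\]
By Step 1 it suffices to show
\[
\tfrac{\alpha}{2}(t^2-s^2)\le \frac{(t-(1-a)s)^2}{2m^2}-\frac{a\alpha s^2}{2}.
\]
Multiplying by $2m^2$, the difference of the right-hand side and the left-hand side becomes
\[
(1-a)t^2-2(1-a)st+(1-a)s^2=(1-a)(t-s)^2 .
\]
This is nonnegative precisely because of the admissibility condition \eqref{eq:rank-one-admissibility}. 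Hence $T_{\alpha,r}(x)\in\partial^c\Phi_{\alpha,r}(x)$ for every $x$, and so $\Phi_{\alpha,r}$ is $c$-concave by the global-support characterization.

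\textbf{Step 3: uniqueness.} The function $\Phi_{\alpha,r}$ is differentiable, with
\[
\D\Phi_{\alpha,r}(x)=\alpha r(x)\,r .
\]
By the remark in Section~\ref{subsec:c-concavity}, any $y'\in\partial^c\Phi_{\alpha,r}(x)$ satisfies $y'-x=J(-\alpha r(x)r)$. Because $J$ is odd and one-homogeneous, this equals $-\alpha r(x)J(r)$, so $y'=T_{\alpha,r}(x)$.

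The only delicate point is Step 2. One must use the sharp equality $\|J(r)\|=m$ for the $h(u)$ term, but only the lower bound from the dual-norm estimate for the $h(w)$ term; the algebra must then close into the perfect square $(1-a)(t-s)^2$. This is where the hypothesis $\alpha m_r^2<1$ enters.
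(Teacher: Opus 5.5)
Your proof is correct and follows essentially the same route as the paper: you verify the global $c$-support inequality at $T_{\alpha,r}(x)$ by combining the exact value $h(\alpha r(x)J(r))=\tfrac12\alpha^2 r(x)^2 m_r^2$ with the dual-norm lower bound $h(w)\ge r(w)^2/(2m_r^2)$. Your perfect square $(1-a)(t-s)^2$ is just the paper's final comparison $\frac{t'^2}{2m_r^2}\ge\frac{\alpha}{2}t'^2$ (with $t'=r(z-x)$) multiplied by $2m_r^2$, and your uniqueness argument is the paper's: the first-order condition plus differentiability of $h^*$.

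One small point: justify $\|J(r)\|=m_r$ via the Fenchel equality $h(J(r))+h^*(r)=r(J(r))=m_r^2$, as the paper does. The identity $r(J(r))=m_r^2$ alone only gives $\|J(r)\|\ge m_r$, whereas the direction you actually need, to bound the subtracted term $h(x-y)$ from above, is $\|J(r)\|\le m_r$.
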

	
	\begin{proof}
		If \(r=0\), then \(\Phi_{\alpha,r}\equiv0\) and \(T_{\alpha,r}=\id\). Since
		\(h\ge0\) and \(h(v)=0\) only for \(v=0\), the assertion is immediate. We henceforth
		assume \(r\ne0\).
		
		Fix \(x\in E\) and put
		\[
		y:=T_{\alpha,r}(x)=x-\alpha r(x)J(r).
		\]
		We prove the global \(c\)-support inequality
		\begin{equation}\label{eq:rank-one-support}
			\Phi_{\alpha,r}(z)-\Phi_{\alpha,r}(x)
			\le h(y-z)-h(y-x)
			\qquad\forall z\in E.
		\end{equation}
		Write
		\[
		s:=r(x),
		\qquad
		u:=z-x,
		\qquad
		t:=r(u)=r(z-x).
		\]
		Then
		\begin{equation}\label{eq:rank-one-potential-diff}
			\Phi_{\alpha,r}(z)-\Phi_{\alpha,r}(x)
			=\frac{\alpha}{2}\bigl((s+t)^2-s^2\bigr)
			=\alpha st+\frac{\alpha}{2}t^2.
		\end{equation}
		On the other hand, by the dual norm inequality,
		\begin{equation}\label{eq:rank-one-dual-lower-bound}
			h(w)=\frac12\|w\|^2
			\ge \frac{r(w)^2}{2\|r\|_*^2}
			=\frac{r(w)^2}{2m_r^2}
			\qquad\forall w\in E.
		\end{equation}
		Since \(J(r)\in\partial h^*(r)\), Fenchel equality gives
		\[
		h(J(r))+h^*(r)=r(J(r))=m_r^2,
		\qquad h^*(r)=\frac12m_r^2,
		\]
		hence
		\begin{equation}\label{eq:h-Jr}
			h(J(r))=\frac12m_r^2.
		\end{equation}
		Using \eqref{eq:rank-one-dual-lower-bound} for \(w=y-z=-\alpha sJ(r)-u\), and
		using \eqref{eq:h-Jr} for \(y-x=-\alpha sJ(r)\), we obtain
		\[
		\begin{aligned}
			h(y-z)-h(y-x)
			&\ge
			\frac{r(-\alpha sJ(r)-u)^2}{2m_r^2}
			-\frac{\alpha^2s^2m_r^2}{2} \\
			&=
			\frac{(-\alpha sm_r^2-t)^2}{2m_r^2}
			-\frac{\alpha^2s^2m_r^2}{2} \\
			&=
			\alpha st+\frac{t^2}{2m_r^2}.
		\end{aligned}
		\]
		The assumption gives \(\alpha<m_r^{-2}\), hence in particular \(\alpha\le m_r^{-2}\).
		Therefore
		\[
		\alpha st+\frac{t^2}{2m_r^2}
		\ge
		\alpha st+\frac{\alpha}{2}t^2.
		\]
		Together with \eqref{eq:rank-one-potential-diff}, this proves
		\eqref{eq:rank-one-support}. Hence
		\(T_{\alpha,r}(x)\in\partial^c\Phi_{\alpha,r}(x)\) for every \(x\), and
		\(\Phi_{\alpha,r}\) is \(c\)-concave.
		
		It remains to prove uniqueness of the \(c\)-subgradient. Let
		\(z\in\partial^c\Phi_{\alpha,r}(x)\). Since \(\Phi_{\alpha,r}\) is differentiable,
		the first-order necessary condition for a \(c\)-support gives
		\[
		\D\Phi_{\alpha,r}(x)=\alpha r(x)r\in -\partial h(z-x).
		\]
		Equivalently,
		\[
		z-x\in \partial h^*(-\alpha r(x)r).
		\]
		Because \(h^*\) is differentiable, the right-hand side is the singleton
		\[
		\{J(-\alpha r(x)r)\}
		=\{-\alpha r(x)J(r)\}.
		\]
		Thus \(z=x-\alpha r(x)J(r)=T_{\alpha,r}(x)\). This proves the single-valued equality
		set.
	\end{proof}

	\begin{lemma}
		\label{lem:rank-one-extension}
		Let \(r\in E^*\), \(\alpha\in\R\), and assume \(\alpha m_r^2<1\). Then
		\(T_{\alpha,r}\) is invertible and
		\[
		\det T_{\alpha,r}=1-\alpha m_r^2>0.
		\]
	\end{lemma}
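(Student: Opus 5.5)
The map \(T_{\alpha,r}=I-\alpha\,J(r)\otimes r\) is the identity minus a rank-one operator. The plan is to compute its determinant with the matrix determinant lemma, \(\det(I+ab^{\top})=1+b^{\top}a\). In coordinate-free form, for \(v\in E\) and \(\ell\in E^*\),
\[
\det(I-v\otimes\ell)=1-\ell(v).
\]

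First I dispose of the case \(r=0\). Then \(T_{\alpha,r}=\id\), and both sides equal \(1\), since \(m_r=0\).

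For \(r\neq0\) the eigenvalues can be read off directly. On the hyperplane \(\ker r\), which has dimension \(n-1\), the map \(T_{\alpha,r}\) acts as the identity. The vector \(J(r)\) satisfies
\[
r(J(r))=m_r^2=\|r\|_*^2>0,
\]
so it does not lie in \(\ker r\). Hence \(E=\ker r\oplus\R J(r)\). On the complementary line,
\[
T_{\alpha,r}J(r)=J(r)-\alpha\, r(J(r))\,J(r)=(1-\alpha m_r^2)\,J(r).
\]
In the adapted basis \(T_{\alpha,r}\) is therefore diagonal with entries \(1,\dots,1,1-\alpha m_r^2\). This gives
\[
\det T_{\alpha,r}=1-\alpha m_r^2,
\]
which is positive by the assumption \(\alpha m_r^2<1\).

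Invertibility follows because the determinant is nonzero. If wanted, the inverse can be written explicitly by the Sherman--Morrison formula:
\[
T_{\alpha,r}^{-1}=I+\frac{\alpha}{1-\alpha m_r^2}\,J(r)\otimes r.
\]
There is no real obstacle here. The only point needing care is the identity \(r(J(r))=m_r^2\), which was already established from the two-homogeneity of \(h^*\) at the start of the subsection.
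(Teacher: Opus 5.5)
Your proposal is correct and follows the paper's proof, which simply invokes the rank-one determinant identity \(\det(I-u\otimes r)=1-r(u)\) with \(u=\alpha J(r)\) and reads off positivity and invertibility from \(\alpha m_r^2<1\). Your decomposition \(E=\ker r\oplus\R J(r)\), with \(T_{\alpha,r}\) acting as the identity on \(\ker r\) and as multiplication by \(1-\alpha m_r^2\) on \(\R J(r)\), proves that identity in this case; the separate treatment of \(r=0\) and the Sherman--Morrison inverse are harmless additions.
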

	
	\begin{proof}
		The determinant formula is the rank-one determinant identity
		\[
		\det(I-u\otimes r)=1-r(u),
		\]
		applied to \(u=\alpha J(r)\). The positivity follows from the assumption \(\alpha m_r^2<1\), so \(T_{\alpha,r}\) is invertible.
		
	\end{proof}
	
	\subsection{The polarization perturbation}\label{subsec:polarization-perturbation}
	
	For \(r\in E^*\), write
	\[
	\Phi_r(x):=\frac12 r(x)^2.
	\]
	We use the polarization identity
	\begin{equation}\label{eq:polarization-zero-sum}
		\Phi_{p+q}+\Phi_{p-q}-2\Phi_p-2\Phi_q=0
		\qquad\forall p,q\in E^*.
	\end{equation}
	Indeed, it is just
	\((p+q)^2+(p-q)^2=2p^2+2q^2\).
	
	Fix \(p,q\in E^*\). Choose \(\varepsilon>0\) so small that
	\begin{equation}\label{eq:epsilon-small-pq}
		\varepsilon M(p,q)<1,
		\qquad
		M(p,q):=\max\{\|p+q\|_*^2,\|p-q\|_*^2,2\|p\|_*^2,2\|q\|_*^2\},
	\end{equation}
	with the convention that there is no restriction if \(M(p,q)=0\). Define four potentials
	\begin{equation}\label{eq:polarization-potentials}
		\psi_1:=\varepsilon\Phi_{p+q},
		\qquad
		\psi_2:=\varepsilon\Phi_{p-q},
		\qquad
		\psi_3:=-2\varepsilon\Phi_p,
		\qquad
		\psi_4:=-2\varepsilon\Phi_q.
	\end{equation}
	By \eqref{eq:polarization-zero-sum},
	\begin{equation}\label{eq:four-zero-sum}
		\psi_1+\psi_2+\psi_3+\psi_4=0.
	\end{equation}
	By Lemma~\ref{lem:negative-convex} and Lemma~\ref{lem:rank-one-square-calibration}, each \(\psi_i\) is \(c\)-concave. The
	corresponding calibrated maps are
	\begin{equation}\label{eq:four-maps}
		\begin{aligned}
			T_1(x)&=x-\varepsilon(p+q)(x)J(p+q),\\
			T_2(x)&=x-\varepsilon(p-q)(x)J(p-q),\\
			T_3(x)&=x+2\varepsilon p(x)J(p),\\
			T_4(x)&=x+2\varepsilon q(x)J(q).
		\end{aligned}
	\end{equation}
	Their determinants are
	\begin{equation}\label{eq:four-determinants}
		\begin{aligned}
			d_1&:=\det T_1=1-\varepsilon\|p+q\|_*^2,\\
			d_2&:=\det T_2=1-\varepsilon\|p-q\|_*^2,\\
			d_3&:=\det T_3=1+2\varepsilon\|p\|_*^2,\\
			d_4&:=\det T_4=1+2\varepsilon\|q\|_*^2.
		\end{aligned}
	\end{equation}
	All four determinants are positive by \eqref{eq:epsilon-small-pq}.

	\begin{proposition}[Unique barycenter for the polarization family]
		\label{prop:polarization-unique-barycenter}
		Let \(\nu\) be a compactly supported absolutely continuous probability measure with finite
		entropy, and set
		\[
		\mu_i:=(T_i)_\#\nu,
		\qquad i=1,2,3,4,
		\]
		where the \(T_i\) are the four maps defined in
		\eqref{eq:four-maps}. Then \(\nu\) is the
		unique Wasserstein barycenter of
		\((\mu_1,\mu_2,\mu_3,\mu_4)\) with equal weights \(1/4\).
	\end{proposition}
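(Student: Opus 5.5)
Existence follows from Lemma~\ref{lem:dual-calibration}; uniqueness comes from the equality cases of the same duality chain, combined with the single-valuedness of the \(c\)-subdifferentials in Lemma~\ref{lem:rank-one-square-calibration}.

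\emph{Step 1: \(\nu\) is a barycenter.} Lemma~\ref{lem:dual-calibration} applies with \(m=4\), \(\lambda_i=1/4\), and the potentials \(\psi_i\) of \eqref{eq:polarization-potentials}. By \eqref{eq:four-zero-sum} they satisfy \(\sum_i\frac14\psi_i=0\). The choice \eqref{eq:epsilon-small-pq} gives \(\alpha m_r^2<1\) for each branch. For \(i=1,2\) we have \(\alpha=\varepsilon\) and \(m_r^2=\|p\pm q\|_*^2\). For \(i=3,4\) we have \(\alpha=-2\varepsilon<0\), so the condition is automatic. Lemma~\ref{lem:rank-one-square-calibration} then says each \(\psi_i\) is \(c\)-concave with \(T_i(x)\in\partial^c\psi_i(x)\) for every \(x\). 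The integrability hypotheses hold because \(\nu\) is compactly supported, so each \(\mu_i\) is compactly supported, and the potentials are quadratic. Hence \(\nu\) is a barycenter, and the minimal value equals \(\frac14\sum_i\int\chi_i\,\dd\mu_i\), where \(\chi_i=\psi_i^c\).

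\emph{Step 2: uniqueness.} Let \(\eta\) be another barycenter. The lower bound in the proof of Lemma~\ref{lem:dual-calibration} is attained by \(\eta\). Choose optimal plans \(\pi_i\in\Pi(\eta,\mu_i)\), which exist since the cost is continuous and the measures lie in \(\cP_2\). Each inequality \(\int c\,\dd\pi_i\ge\int\psi_i\,\dd\eta+\int\chi_i\,\dd\mu_i\) holds individually, and the weighted sum is an equality. Therefore each is an equality.

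Since \(c-\psi_i\oplus\chi_i\ge0\) pointwise, equality forces \(\psi_i(x)+\chi_i(y)=c(x,y)\) for \(\pi_i\)-a.e. \((x,y)\). Such points satisfy \(y\in\partial^c\psi_i(x)\): from \(\chi_i(y)=\inf_z\{c(z,y)-\psi_i(z)\}\) we get the global \(c\)-support inequality at \(x\) with target \(y\).

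By the single-valuedness part of Lemma~\ref{lem:rank-one-square-calibration}, \(y=T_i(x)\) \(\pi_i\)-a.e. This holds for every \(x\), not merely \(\nu\)-a.e., which matters because \(\eta\) is a priori not absolutely continuous. Hence \(\pi_i=(\id,T_i)_\#\eta\), and so \(\mu_i=(T_i)_\#\eta\).

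\emph{Step 3: conclusion.} Lemma~\ref{lem:rank-one-extension} shows that \(T_1\) is an invertible affine (indeed linear) map. From \((T_1)_\#\eta=\mu_1=(T_1)_\#\nu\) we obtain \(\eta=(T_1^{-1})_\#\mu_1=\nu\). Any single branch suffices for this step.

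\emph{Main obstacle.} The delicate point is Step 2: one must check that equality in the averaged duality bound forces equality in each branch. This uses the termwise inequalities together with the zero-sum condition, which makes the \(\eta\)-dependent term \(\int\sum_i\lambda_i\psi_i\,\dd\eta\) vanish. One must also check that the contact set of \((\psi_i,\chi_i)\) lies in the graph of \(\partial^c\psi_i\) without any regularity assumption on \(\eta\). This is exactly why the everywhere single-valuedness in Lemma~\ref{lem:rank-one-square-calibration}, which relies on strict convexity, is needed.
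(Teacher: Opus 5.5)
Your proposal is correct and follows the paper's proof essentially step for step. Existence comes from the zero-sum calibration Lemma~\ref{lem:dual-calibration}. Uniqueness comes from forcing each Kantorovich gap to vanish $\pi_i$-a.e., then using the everywhere single-valuedness of $\partial^c\psi_i$ from Lemma~\ref{lem:rank-one-square-calibration} to get $\mu_i=(T_i)_\#\eta$, and finally inverting $T_1$ via Lemma~\ref{lem:rank-one-extension}.
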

	
	\begin{proof}
		By Lemma~\ref{lem:dual-calibration}, we know that \(\nu\) is a Wasserstein barycenter. 
		
		We now prove uniqueness. Let \(\eta\) be any Wasserstein barycenter of
		\((\mu_1,\mu_2,\mu_3,\mu_4)\) with equal weights. It follows that 
		\(\eta\) also minimizes the barycenter functional. Combining this with \eqref{eq:four-zero-sum}, we obtain
		
		\begin{equation}\label{eq:polarization-barycenter-lower-bound}
			\frac14\sum_{i=1}^4 \mathsf W_2^2(\eta,\mu_i)
			=
			\frac14\sum_{i=1}^4 \mathsf W_2^2(\nu,\mu_i)
			=
			\frac14\sum_{i=1}^4\int_E\psi_i^c\,\dd\mu_i.
		\end{equation}
		
		For each \(i\), define the nonnegative Kantorovich gap
		\[
		G_i(x,y):=\|x-y\|^2-\psi_i(x)-\psi_i^c(y)\ge0.
		\]
		Let \(\pi_i\) be an optimal plan from \(\eta\) to \(\mu_i\). Equality in \eqref{eq:polarization-barycenter-lower-bound} implies
		\[
		\frac14\sum_{i=1}^4\int_{E\times E}G_i(x,y)\,\dd\pi_i(x,y)=0.
		\]
		Because each term is nonnegative, we have
		\[
		G_i(x,y)=0
		\qquad \pi_i\text{-a.e.}
		\]
		for every \(i\). Hence
		\[
		y\in\partial^c\psi_i(x)
		\qquad \pi_i\text{-a.e.}
		\]
		By Lemma~\ref{lem:rank-one-square-calibration}, the equality set of each
		\(\psi_i\) is single-valued:
		\[
		\partial^c\psi_i(x)=\{T_i(x)\}
		\qquad\forall x\in E.
		\]
		Therefore
		\[
		\pi_i=(\id,T_i)_\#\eta,
		\qquad\text{and hence}\qquad
		\mu_i=(T_i)_\#\eta.
		\]
		In particular, for \(i=1\),
		\[
		(T_1)_\#\eta=\mu_1=(T_1)_\#\nu.
		\]
		By Lemma~\ref{lem:rank-one-extension}, \(T_1\) is an invertible affine map.
		Pushing forward by \(T_1^{-1}\), we obtain
		\[
		\eta=\nu.
		\]
		Thus \(\nu\) is the unique barycenter.
	\end{proof}
	
	\subsection{The entropy inequality gives the parallelogram law}\label{subsec:entropy-parallelogram}
	
	\begin{proposition}[BCD forces the dual parallelogram identity]
		\label{prop:dual-parallelogram}
		Assume \((E,\|\cdot\|,\mathcal L^n)\) satisfies \(\mathrm{BCD}(0,\infty)\), and assume the norm is strictly convex. Then
		\begin{equation}\label{eq:dual-parallelogram}
			\|p+q\|_*^2+\|p-q\|_*^2
			=2\|p\|_*^2+2\|q\|_*^2
			\qquad\forall p,q\in E^*.
		\end{equation}
	\end{proposition}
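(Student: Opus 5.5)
We apply the BCD inequality to the polarization family of Proposition~\ref{prop:polarization-unique-barycenter}. We then extract the parallelogram identity from a second-order expansion in \(\varepsilon\), run for both \(\pm\) choices of the quadratic perturbation.

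Fix \(p,q\in E^*\) and a compactly supported absolutely continuous \(\nu\) with finite entropy, for instance the normalized Lebesgue measure on a ball. Take \(\varepsilon\) as in \eqref{eq:epsilon-small-pq} and \(\mu_i=(T_i)_\#\nu\). By Proposition~\ref{prop:polarization-unique-barycenter}, \(\nu\) is the unique barycenter with weights \(1/4\). Hence the barycenter provided by \(\mathrm{BCD}(0,\infty)\) must be \(\nu\) itself, and \eqref{jensen} with \(K=0\) reads
\[
\Ent(\nu)\le \frac14\sum_{i=1}^4\Ent(\mu_i).
\]
By Corollary~\ref{cor:affine-entropy} and \eqref{eq:four-determinants}, \(\Ent(\mu_i)=\Ent(\nu)-\log d_i\). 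The inequality therefore becomes \(\sum_i\log d_i\le 0\), that is
\[
\log(1-\varepsilon a)+\log(1-\varepsilon b)+\log(1+2\varepsilon c)+\log(1+2\varepsilon d)\le 0,
\]
with \(a=\|p+q\|_*^2\), \(b=\|p-q\|_*^2\), \(c=\|p\|_*^2\), \(d=\|q\|_*^2\). Expanding to first order gives
\[
\varepsilon\bigl(2c+2d-a-b\bigr)+O(\varepsilon^2)\le0.
\]
Dividing by \(\varepsilon\) and letting \(\varepsilon\downarrow0\) yields \(2\|p\|_*^2+2\|q\|_*^2\le\|p+q\|_*^2+\|p-q\|_*^2\).

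For the reverse inequality we flip the signs of the potentials. We use \(\psi_1=-\varepsilon\Phi_{p+q}\), \(\psi_2=-\varepsilon\Phi_{p-q}\), \(\psi_3=2\varepsilon\Phi_p\), \(\psi_4=2\varepsilon\Phi_q\). These still sum to zero by \eqref{eq:polarization-zero-sum}.
\begin{itemize}
\item \emph{Calibration.} Each potential is \(\Phi_{\alpha,r}\) with either \(\alpha<0\) or \(\alpha m_r^2<1\) for small \(\varepsilon\). Lemma~\ref{lem:rank-one-square-calibration} therefore applies to each one. It gives single-valued calibrated maps, for example \(T_1(x)=x+\varepsilon(p+q)(x)J(p+q)\) and \(T_3(x)=x-2\varepsilon p(x)J(p)\).
\item \emph{Determinants.} Lemma~\ref{lem:rank-one-extension} gives the determinants \(1+\varepsilon a\), \(1+\varepsilon b\), \(1-2\varepsilon c\) and \(1-2\varepsilon d\). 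All are positive for small \(\varepsilon\).
\item \emph{Uniqueness.} The proof of Proposition~\ref{prop:polarization-unique-barycenter} used only zero sum, single-valued \(c\)-subdifferentials and invertibility of one map. It therefore applies verbatim, so \(\nu\) is again the unique barycenter.
\end{itemize}
BCD then gives \(\varepsilon(a+b-2c-2d)+O(\varepsilon^2)\le0\), hence \(a+b\le 2c+2d\). Combining the two inequalities yields \eqref{eq:dual-parallelogram}.

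The main point to check is the sign-flipped family. We must confirm that the admissibility condition \eqref{eq:rank-one-admissibility} holds for each branch, which is automatic for the negative-\(\alpha\) branches and only needs \(\varepsilon\) small for the positive ones. The rest is bookkeeping. The \(O(\varepsilon^2)\) remainder is uniform since \(a,b,c,d\) are fixed, and if \(M(p,q)=0\) the identity is trivial.
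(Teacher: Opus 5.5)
Your proof is correct and follows the paper's approach. The first inequality is obtained exactly as in the paper. For the reverse inequality, the paper simply applies the first inequality to the pair $(p+q,p-q)$. Since $\Phi_{2p}=4\Phi_p$, your sign-flipped family is exactly the paper's family for that pair with parameter $\varepsilon/2$, so re-checking calibration and uniqueness is valid but not needed.
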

	
	\begin{proof}
		Fix \(p,q\in E^*\), and choose \(\varepsilon>0\) satisfying
		\eqref{eq:epsilon-small-pq}. Let \(\nu\) be any compactly supported absolutely continuous
		probability measure with finite entropy, and construct \(\mu_1,\dots,\mu_4\) from
		\eqref{eq:four-maps}. By Proposition~\ref{prop:polarization-unique-barycenter}, the unique
		barycenter is \(\nu\). Hence BCD gives
		\begin{equation}\label{eq:bcd-four-entropy}
			\Ent(\nu)
			\le \frac14\sum_{i=1}^4\Ent(\mu_i).
		\end{equation}
		Each \(T_i\) is affine, so Corollary~\ref{cor:affine-entropy} gives
		\[
		\Ent(\mu_i)=\Ent(\nu)-\log d_i,
		\]
		where the determinants \(d_i\) are listed in \eqref{eq:four-determinants}. Substituting
		this into \eqref{eq:bcd-four-entropy} yields
		\begin{equation}\label{eq:product-le-one-first}
			d_1d_2d_3d_4\le1.
		\end{equation}
		In other words,
		\begin{equation}\label{eq:first-product-expanded}
			\begin{aligned}
				&(1-\varepsilon\|p+q\|_*^2)
				(1-\varepsilon\|p-q\|_*^2)
				(1+2\varepsilon\|p\|_*^2)
				(1+2\varepsilon\|q\|_*^2)
				\le1.
			\end{aligned}
		\end{equation}
		Letting \(\varepsilon\downarrow0\) gives the first-order inequality
		\begin{equation}\label{eq:first-parallelogram-side}
			\|p+q\|_*^2+\|p-q\|_*^2
			\ge 2\|p\|_*^2+2\|q\|_*^2.
		\end{equation}
		
		Since \eqref{eq:first-parallelogram-side} holds for every pair of covectors, we may
		apply it to the pair \(p+q,p-q\). This gives
		\begin{equation}\label{eq:second-parallelogram-side}
			\|p+q\|_*^2+\|p-q\|_*^2
			\le 2\|p\|_*^2+2\|q\|_*^2.
		\end{equation}
		Combining \eqref{eq:first-parallelogram-side} and \eqref{eq:second-parallelogram-side}
		proves \eqref{eq:dual-parallelogram}.
	\end{proof}

	\begin{corollary}[Dual parallelogram identity implies Hilbertianity]
		\label{cor:hilbertianity}
		If \(\|\cdot\|_*\) satisfies \eqref{eq:dual-parallelogram}, then \(\|\cdot\|\) is induced
		by an inner product on \(E\).
	\end{corollary}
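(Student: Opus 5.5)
By the Jordan--von Neumann theorem \cite{JordanvonNeumann1935}, a norm that satisfies the parallelogram identity is induced by an inner product. Applied on $E^*$ with the hypothesis \eqref{eq:dual-parallelogram}, this shows that $\|\cdot\|_*$ comes from an inner product. For completeness, I would recover it directly through polarization: set
\[
\langle p,q\rangle_*:=\tfrac14\bigl(\|p+q\|_*^2-\|p-q\|_*^2\bigr).
\]
Symmetry and $\langle p,p\rangle_*=\|p\|_*^2$ are immediate. Additivity in the first argument follows from \eqref{eq:dual-parallelogram} by the standard manipulation: apply the identity to the pairs $(p+q_1, q_2)$ and $(p-q_1,q_2)$ and subtract, which gives $\langle q_1+q_2,p\rangle_*+\langle q_1-q_2,p\rangle_*=2\langle q_1,p\rangle_*$ and hence Cauchy additivity. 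Together with continuity of the norm, additivity gives $\mathbb R$-homogeneity, via $\mathbb Q$-homogeneity and a density argument. Positive definiteness is inherited from the norm.

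It remains to transfer the conclusion from $E^*$ to $E$. Let $G:E^*\to E$ be the linear isomorphism with $q(G p)=\langle p,q\rangle_*$; it exists and is unique because $E$ is finite-dimensional and the form is nondegenerate. Define $\langle x,y\rangle:=\langle G^{-1}x,G^{-1}y\rangle_*$ on $E$. We must show $\|x\|^2=\langle x,x\rangle$. Since $E$ is reflexive, $\|x\|=\sup_{\|q\|_*\le1}q(x)$. Write $x=Gp$; then $q(x)=\langle p,q\rangle_*\le\|p\|_*\|q\|_*$ by Cauchy--Schwarz for the inner product $\langle\cdot,\cdot\rangle_*$, with equality at $q=p/\|p\|_*$ (assuming $p\neq0$). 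Hence $\|x\|=\|p\|_*=\langle x,x\rangle^{1/2}$, so $\|\cdot\|$ is induced by an inner product.

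The proof is essentially classical, and no step is a serious obstacle. The only place that needs care is checking that the duality transfer uses the correct identification, namely that $G$ is the Riesz map of $\langle\cdot,\cdot\rangle_*$ and that $\sup_{\|q\|_*\le1} q(Gp)$ is computed with the same inner product. I would cite Jordan--von Neumann for the first step rather than redo the additivity argument in full.
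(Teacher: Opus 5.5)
Your proposal is correct and follows the same route as the paper. You apply Jordan--von Neumann on $E^*$ and then return to $E$ through $\|\cdot\|=\|\cdot\|_{**}$. The only difference is that your Riesz-map and Cauchy--Schwarz computation spells out the step the paper asserts in one line, namely that the dual of an inner-product norm is again an inner-product norm.
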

	
	\begin{proof}
		By the Jordan--von Neumann theorem, the dual norm $\|\cdot\|_*$ is induced by an inner product. Hence its dual norm $\|\cdot\|_{**}$ is also induced by an inner product. Since $\|\cdot\|=\|\cdot\|_{**}$, the result follows.
	\end{proof}

	\section{Strict convexity from BCD}\label{sec:strict-convexity}
	
		It remains to consider the case where the norm is not strictly convex. In this section, we show that $\mathrm{BCD}(0,\infty)$ in fact forces strict convexity. The main difficulty in the non-strictly convex case is that Wasserstein barycenters of general distributions $\sum_{i=1}^{m}\lambda_i\delta_{\mu_i}$ are typically non-unique, so the $\mathrm{BCD}$ condition cannot be applied directly. We overcome this difficulty by constructing a special distribution below whose Wasserstein barycenter is unique, using the flat faces of the unit ball. Recall that the presence of flat faces in the unit ball is precisely the obstruction to strict convexity.
	
	\begin{proposition}[Maximal-face trapping: barycenter convexity rules out flat faces]
		\label{prop:strict-convexity}
		If $(E,\|\cdot\|,\cL^n)$ satisfies $\mathrm{BCD}(0,\infty)$, then the unit ball of $\|\cdot\|$ has no non-trivial line segment. Hence $\|\cdot\|$ is strictly convex.
	\end{proposition}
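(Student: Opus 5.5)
We argue by contradiction, following the maximal-face strategy sketched in the introduction. Suppose the unit sphere contains a nontrivial segment. Then there is a covector \(\ell\in E^*\) with \(\|\ell\|_*=1\) whose exposed face
\[
F:=\{v\in E:\ \|v\|=1,\ \ell(v)=1\}=\partial h^*(\ell)
\]
has positive dimension. We take \(\ell\) so that this face is maximal (inclusion-maximal among exposed faces containing the segment). The set \(F\) is compact and convex. Lemma~\ref{lem} then gives a nonzero linear functional \(\beta\) (restricted to the affine hull of \(F\), nonconstant there) that attains its maximum and its minimum on \(F\) at unique points \(a_+\) and \(a_-\). These two extreme points are the translation anchors.

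\textbf{Construction of the family.} We use affine \(c\)-concave potentials, which are admissible by Lemma~\ref{lem:negative-convex}. The potential \(\psi(x)=-\ell(x)\) has \(c\)-subdifferential \(\partial^c\psi(x)=x+F\), because \(y-x\in\partial h^*(\ell)\). We perturb it to \(\psi_\pm(x)=-\ell(x)\mp\delta\beta(x)\) with \(\delta\) small. Since \(h^*\) is differentiable along generic directions, we expect \(\partial h^*(\ell\pm\delta\beta)\) to be a singleton close to \(a_\pm\). To arrange this we replace \(\beta\) by a generic covector from the full-measure set in the proof of Lemma~\ref{lem}, choosing it so that \(h^*\) is differentiable at \(\ell\pm\delta\beta\). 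The anchor maps are then the translations
\[
T_\pm(x)=x+b_\pm,\qquad \{b_\pm\}=\partial h^*(\ell\pm\delta\beta).
\]
The third potential is forced by the zero-sum condition. With weights \(\lambda_1=\lambda_2=\lambda_3=1/3\), set \(\psi_3=2\ell\). Then \(\psi_++\psi_-+\psi_3=0\), and \(\psi_3\) is \(c\)-concave with \(\partial^c\psi_3(x)=x-2F\). The family \(\mu_1=(T_+)_\#\nu\), \(\mu_2=(T_-)_\#\nu\), \(\mu_3=(S_t)_\#\nu\) is calibrated by Lemma~\ref{lem:dual-calibration} for any measurable selection \(S_t(x)\in x-2F\). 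The key freedom is that we choose
\[
S_t(x)=x-2\bigl(v_0+tZ(x)\bigr),
\]
where \(v_0\) lies in the relative interior of \(F\) and \(Z\in C_c^1\) takes values in the direction space of \(F\), so that \(v_0+tZ\in F\) for small \(t\).

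\textbf{Uniqueness of the barycenter.} This follows the argument of Proposition~\ref{prop:polarization-unique-barycenter}. For any barycenter \(\eta\), vanishing of the Kantorovich gaps forces the optimal plans from \(\eta\) to \(\mu_1\) and \(\mu_2\) to lie in the graphs of \(\partial^c\psi_\pm\). These are single-valued translations, so \(\mu_1=(T_+)_\#\eta\), and hence \(\eta=\nu\) because translations are invertible. This is exactly where extremality of the anchors is used: single-valuedness of the first branch alone already pins down \(\eta\). The third branch is then unconstrained, which is what we exploit.

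\textbf{Entropy contradiction.} BCD applied with the unique barycenter \(\nu\) gives
\[
3\Ent(\nu)\le \Ent(\mu_1)+\Ent(\mu_2)+\Ent(\mu_3)=2\Ent(\nu)+\Ent\bigl((S_t)_\#\nu\bigr).
\]
By Corollary~\ref{cor:smooth-perturbation-entropy},
\[
\Ent\bigl((S_t)_\#\nu\bigr)=\Ent(\nu)+2t\int\rho\operatorname{div}Z\,\dd x+O(t^2).
\]
We choose \(\nu\) with a smooth density and \(Z=\nabla\)-type tangent field so that \(\int\rho\operatorname{div}Z\neq0\). For example, take \(Z=\phi\,w\) with \(w\) a fixed direction of \(F\) and \(\int\rho\,\partial_w\phi\neq0\). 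Choosing the sign of \(t\) appropriately then makes \(\Ent(\mu_3)<\Ent(\nu)\), contradicting the BCD inequality above.

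\textbf{Main obstacle.} The delicate step is making the anchor branches genuinely single-valued while keeping the zero-sum identity exact. The perturbation \(\pm\delta\beta\) must cancel between the two anchors, and the covectors \(\ell\pm\delta\beta\) must be simultaneous differentiability points of \(h^*\). I would first try to choose \(\beta\) generic so that both \(\ell+\delta\beta\) and \(\ell-\delta\beta\) avoid the null nondifferentiability set, which is possible for a.e. \((\beta,\delta)\) by Rademacher applied to \(h^*\). Maximality of the face is used only to guarantee that \(\ell\) itself exposes a positive-dimensional face, so that the third branch has room to move.
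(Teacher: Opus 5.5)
Your proof is correct. It shares the paper's architecture: the exposed face $F=\partial h^*(\ell)$, three linear zero-sum potentials with equal weights, two translation anchors, a face-valued third branch, and the first-variation entropy contradiction. It proves uniqueness of the barycenter by a genuinely different mechanism.

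In the paper (where your $\ell$ is called $p$), both anchors carry the same potential $-tp$, so their $c$-subdifferentials are the multivalued sets $x+tF$. Uniqueness therefore needs the maximal-face trapping argument. One takes a functional with unique maximizer $a$ and minimizer $b$ on $F$ (Lemma~\ref{lem}). Translations by $a$ and by $b$ give opposite inequalities between its moments under $\eta$ and $\nu$, and equality forces the coupling to be the identity.

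You instead tilt the anchor covectors to $\ell\pm\delta\beta$. This keeps the zero-sum identity exact and keeps the anchors translations. You then pick $\beta$ so that $\ell+\delta\beta$ avoids the null set where $h^*$ is not differentiable. Then $\partial^c\psi_+(x)=\{x+b_+\}$, and uniqueness follows verbatim as in Proposition~\ref{prop:polarization-unique-barycenter} from that single branch.

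Your route is shorter and makes the non-strictly convex case run exactly like the strictly convex one. The cost is that your anchors are only generic rather than explicit extreme points of $F$. The paper's version keeps all three covectors on the line through $p$ and extracts uniqueness from the geometry of one face alone.

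In your route, several ingredients are never used: Lemma~\ref{lem}, the extremality of $a_\pm$, the closeness of $b_\pm$ to $a_\pm$, the maximality of the face, and differentiability at $\ell-\delta\beta$. So your remark that uniqueness is ``exactly where extremality of the anchors is used'' misidentifies the ingredient; it is differentiability of $h^*$ at $\ell+\delta\beta$.

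One technical correction: $S_t(x)=x-2v_0-2tZ(x)$ contains an $O(1)$ translation. Strip it off first (translations preserve entropy), and only then apply Corollary~\ref{cor:smooth-perturbation-entropy} to $\id-2tZ$.
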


	\begin{proof}
	
	\emph{Step 1: Characterization of the exposed face.} 
	
	Assume that the unit ball $B=\{v:\|v\|\le1\}$ is not strictly convex. Then there exist distinct $u_0,u_1\in\partial B$ with $[u_0,u_1]\subset\partial B$. Let $m=(u_0+u_1)/2$ and choose a supporting covector $p\in E^*$ such that
		\[
		\|p\|_*=1,
		\qquad p(m)=1,
		\qquad p(v)\le1\quad\forall v\in B.
		\]
		Then $p(u_0)=p(u_1)=1$, and
		\[
		F:=\{v\in B:p(v)=1\}
		\]
		is a positive-dimensional exposed face.
		
		We first prove the face identity used throughout the proof. Since $h^*(p)=\frac12\|p\|_*^2=\frac12$, Fenchel equality gives
		\[
		v\in\partial h^*(p)
		\quad\Longleftrightarrow\quad
		h(v)+h^*(p)=p(v).
		\]
		Writing $r=\|v\|$, the right-hand side implies
		\[
		\frac12 r^2+\frac12=p(v)\le \|p\|_*\|v\|=r.
		\]
		Thus $(r-1)^2\le0$, so $r=1$ and $p(v)=1$, i.e. $v\in F$. Conversely, if $v\in F$, then $\|v\|=1$ and $p(v)=1$, so the Fenchel equality holds. Hence
		\[
		\partial h^*(p)=F.
		\]
		By the two-homogeneity of $h^*$, for every $\alpha\ge0$,
		\[
		\partial h^*(\alpha p)=\alpha F,
		\]
		with the convention $0F=\{0\}$.
		
		\medskip
	\noindent	\emph{Step 2: A face-valued perturbation with negative averaged divergence.}
		
		Let $T=\operatorname{span}(F-F)$. Choose a linear functional $\ell$ on $E$ whose restriction to $F$ has a unique maximizer $a$ and a unique minimizer $b$:
		\[
		a=\argmax_{f\in F}\ell(f),
		\qquad b=\argmin_{f\in F}\ell(f).
		\]
		Such an $\ell$ exists by Lemma~\ref{lem}.
		
		All directional derivatives and divergences below are computed with respect to a fixed linear coordinate system inducing the Lebesgue measure \(\mathcal L^n\).
		Pick $e_0\in\ri F$ and $0\ne\tau\in T$. Choose a smooth compactly supported probability density \(\rho\) such that \(D_\tau\rho\) is not identically zero, where \(D_\tau\) denotes the directional derivative in the direction \(\tau\). Set
		\[
		\varphi:=D_\tau\rho.
		\]
		Then \(\varphi\in C_c^\infty(E)\), and integration by parts gives
		\[
		\int_E \rho\,D_\tau\varphi\,\dd x
		=-\int_E (D_\tau\rho)^2\,\dd x<0.
		\]
		For sufficiently small $\delta>0$, define
		\[
		e(x):=e_0+\delta\varphi(x)\tau
		\]
		so that $e(x)\in F$ for every $x$. Indeed, \(e_0\in\ri F\) and
		\(\tau\in\operatorname{span}(F-F)\), so \(e_0+s\tau\in F\) for all sufficiently small
		\(|s|\). Since \(\diver e=\delta D_\tau\varphi\), we have
		\begin{equation}\label{eq:face-negative-divergence}
			\int_E \rho\,\diver e\,\dd x<0.
		\end{equation}
		
		\medskip
		\noindent\emph{Step 3: Three calibrated branches and uniqueness of the barycenter.}
		
		Let $\nu=\rho\cL^n$. For small $t>0$, define
		\[
		T_1^t(x)=x+ta,
		\qquad T_2^t(x)=x+tb,
		\qquad T_3^t(x)=x-2t e(x),
		\]
		and set $\mu_i^t=(T_i^t)_\#\nu$. Since \(\D e\) is bounded, for sufficiently small \(t\) the map \(T_3^t=\id-2t e\) is a \(C^1\)-diffeomorphism on an open neighbourhood of \(\supp\nu\). Thus all $\mu_i^t$ have finite entropy and compact support.
		
		Set
		\[
		q_1=p,
		\qquad q_2=p,
		\qquad q_3=-2p,
		\]
		and
		\[
		\psi_i^t(y):=-t\,q_i(y).
		\]
		Then $\psi_1^t+\psi_2^t+\psi_3^t=0$. For a general covector $q\in E^*$,
		\[
		\begin{aligned}
			(\psi_q^t)^c(z)
			&=\inf_y\{h(z-y)+tq(y)\} \\
			&=tq(z)-h^*(tq).
		\end{aligned}
		\]
		Equality holds if and only if
		\[
		z-y\in\partial h^*(tq),
		\]
		again by Fenchel equality. Therefore
		\[
		T_1^t(x)-x=ta\in tF=t\partial h^*(p),
		\]
		\[
		T_2^t(x)-x=tb\in tF=t\partial h^*(p),
		\]
		and
		\[
		T_3^t(x)-x=-2t e(x)\in -2tF=t\partial h^*(-2p).
		\]
		Thus the plans $(\id,T_i^t)_\#\nu$ are calibrated by $\psi_i^t$ and their $c$-transforms. Lemma~\ref{lem:dual-calibration} implies that $\nu$ is a Wasserstein barycenter of $\mu_1^t,\mu_2^t,\mu_3^t$ with equal weights.
		
		We now prove uniqueness. Let $\eta$ be any Wasserstein barycenter, and denote by $\chi_i^t=(\psi_i^t)^c$ the dual partners. For each \(i\), let \(\pi_i\) be an optimal plan from \(\eta\) to \(\mu_i^t\), and set
		\[
		G_i^t(y,z):=c(y,z)-\psi_i^t(y)-\chi_i^t(z)\ge0.
		\]
		Since \(\psi_1^t+\psi_2^t+\psi_3^t=0\), we have
		\[
		\frac13\sum_{i=1}^3\int G_i^t\,\dd\pi_i
		=
		\frac13\sum_{i=1}^3 \mathsf C(\eta,\mu_i^t)
		-
		\frac13\sum_{i=1}^3\int \chi_i^t\,\dd\mu_i^t.
		\]
		The second term is the barycenter dual value, already attained at \(\nu\). Since
		\(\eta\) is also a Wasserstein barycenter, the right-hand side is zero. Hence
		\[
		\frac13\sum_{i=1}^3\int G_i^t\,\dd\pi_i=0.
		\]
		Because every \(G_i^t\ge0\), it follows that
		\(G_i^t=0\) for \(\pi_i\)-almost every point and every \(i\). Consequently, for $i=1,2$ the optimal plan $\pi_i\in\Pi(\eta,\mu_i^t)$ is supported on the corresponding equality set,
		\[
		z-y\in tF.
		\]
		
		Consider $i=1$. Let $S_a(x)=x+ta$. Since $\mu_1^t=(S_a)_\#\nu$, push $\pi_1$ forward by the map
		\[
		(y,z)\longmapsto (y,x),\qquad x=S_a^{-1}(z)=z-ta.
		\]
		The resulting plan $\widehat\pi_1$ has marginals $\eta$ and $\nu$. On its support, $z-y=tf$ for some $f\in F$, and $z=x+ta$; hence
		\[
		y=x+t(a-f).
		\]
		Therefore
		\[
		\ell(y)-\ell(x)=t\bigl(\ell(a)-\ell(f)\bigr)\ge0.
		\]
		Integrating with respect to $\widehat\pi_1$ gives
		\begin{equation}\label{eq:anchor-a-ineq}
			\int \ell\,\dd\eta-\int \ell\,\dd\nu
			=\int t\bigl(\ell(a)-\ell(f)\bigr)\,\dd\widehat\pi_1\ge0.
		\end{equation}
		
		For $i=2$, with $S_b(x)=x+tb$ and $x=S_b^{-1}(z)=z-tb$, the same argument gives a plan $\widehat\pi_2$ with marginals $\eta$ and $\nu$ and
		\[
		y=x+t(b-f),
		\qquad f\in F.
		\]
		Thus
		\[
		\ell(y)-\ell(x)=t\bigl(\ell(b)-\ell(f)\bigr)\le0,
		\]
		and hence
		\begin{equation}\label{eq:anchor-b-ineq}
			\int \ell\,\dd\eta\le\int \ell\,\dd\nu.
		\end{equation}
		Combining \eqref{eq:anchor-a-ineq} and \eqref{eq:anchor-b-ineq}, equality holds in \eqref{eq:anchor-a-ineq}. The integrand $t(\ell(a)-\ell(f))$ is nonnegative, so it vanishes \(\widehat\pi_1\)-almost everywhere. Since $a$ is the unique maximizer of $\ell$ on $F$, we have $f=a$ \(\widehat\pi_1\)-almost everywhere. Hence $y=x$ \(\widehat\pi_1\)-almost everywhere. The two marginals of $\widehat\pi_1$ are therefore equal, and $\eta=\nu$.
		Thus the barycenter is unique.

		\medskip
		\noindent\emph{Step 4: The entropy contradiction.}
		
		 Applying $\mathrm{BCD}(0,\infty)$ gives
		\[
		\Ent(\nu)
		\le\frac13\Ent(\mu_1^t)+\frac13\Ent(\mu_2^t)+\frac13\Ent(\mu_3^t).
		\]
		The first two measures are translations of $\nu$, hence have the same entropy. Therefore
		\begin{equation}\label{eq:face-bcd-entropy}
			\Ent(\nu)\le \Ent(\mu_3^t).
		\end{equation}
		But $\mu_3^t=(\id-2t e)_\#\nu$. Applying Corollary~\ref{cor:smooth-perturbation-entropy} with $V=-2e$ gives
		\[
		\Ent(\mu_3^t)
		=\Ent(\nu)+2t\int_E\rho\,\diver e\,\dd x+O(t^2).
		\]
		By \eqref{eq:face-negative-divergence}, this is strictly smaller than $\Ent(\nu)$ for small $t>0$, contradicting \eqref{eq:face-bcd-entropy}. Hence no positive-dimensional exposed face exists, and the unit ball is strictly convex.
	\end{proof}
	
	\subsection*{Proofs of the main theorem and the Finsler corollary}\label{sec:main-corollary-proofs}
	
	\begin{proof}[Proof of Theorem~\ref{thm:main-intro}]
		Proposition~\ref{prop:strict-convexity} implies that the norm is strictly convex. Consequently \(h^*\) is differentiable and the rank-one
		square perturbations constructed above are available.
		
		Proposition~\ref{prop:dual-parallelogram} gives the
		parallelogram identity. Corollary~\ref{cor:hilbertianity} then implies that the original
		norm is induced by an inner product. This proves the theorem.
	\end{proof}
	
	\begin{proof}[Proof of Corollary~\ref{cor:finsler-exclusion}]
		Fix \(x\in M\). For \(r>0\), consider the rescaled pointed metric-measure space
		\[
		(M,r^{-1}\mathsf d_F,c_r\mathfrak m,x),
		\]
		where \(c_r>0\) is a normalizing constant chosen for the measured blow-up. For the
		rescaled distance \(r^{-1}\mathsf d_F\), the squared Wasserstein distances are
		multiplied by \(r^{-2}\). Hence the original \(\mathrm{BCD}(K,\infty)\) inequality
		becomes a \(\mathrm{BCD}(K r^2,\infty)\) inequality for the rescaled space. Multiplying
		the reference measure by \(c_r>0\) only adds the same constant to all entropy terms and
		therefore does not affect the Jensen inequality.
		
		By the standard first-order blow-up of a smooth reversible Finsler manifold, the pointed
		measured spaces above converge, as \(r\downarrow0\), to
		\[
		(T_xM,F_x,a_x\mathcal L_x,0),
		\]
		where \(a_x>0\) and \(\mathcal L_x\) is a Lebesgue measure on \(T_xM\); see
		\cite[Chapter~1]{BaoChernShen} for the Finsler tangent norm and \cite{OhtaSturm} for the
		weighted Finsler metric-measure setting. Since \(K r^2\to0\), the stability theorem for
		the barycenter curvature-dimension condition under measured Gromov--Hausdorff convergence
		\cite[Theorem~6.6]{HanLiuZhuBCD} gives the \(\mathrm{BCD}(0,\infty)\) Jensen inequality
		on the tangent space, at least for compactly supported families of measures. This is the
		form used in the proof of Theorem~\ref{thm:main-intro}.
		
		It follows that \(F_x\) is induced by an inner product. Since
		\(x\) is arbitrary, \(F\) is Riemannian.
	\end{proof}

	\bigskip
	
	\phantomsection
	\section*{Funding}\label{sec:funding}
	This work was supported in part by the National Key R\&D Programs of China (2021YFA1000900, 2021YFA1002200), the National Natural Science Foundation of China (12201596), the Shandong Provincial Natural Science Foundation (ZR2025QB05), and the Taishan Scholars Program of Shandong Province (tsqn202408059).
	
	\medskip
	
	\phantomsection
	\section*{Acknowledgements}\label{sec:acknowledgements}
	The authors would like to thank Emanuel Milman for valuable discussions concerning the relation between \(\mathrm{BCD}\) and the classical curvature-dimension theory.

	\medskip
	
	\phantomsection
	\section*{Declarations}\label{sec:declarations}
	The authors declare that they have no conflict of interest and that the manuscript has no associated data.
\end{document}